\tikzset{
  commutative diagrams/.cd,
  arrow style=tikz
  }
\DeclareMathAlphabet{\mathcalligra}{T1}{calligra}{m}{n}
\DeclareMathAlphabet{\mathpzc}{OT1}{pzc}{m}{it}
\newtheorem{theorem}{Theorem}[section]
\newtheorem{corollary}[theorem]{Corollary}
\newtheorem{lemma}[theorem]{Lemma}
\newtheorem{proposition}[theorem]{Proposition}
\theoremstyle{definition}
\newtheorem{definition}[theorem]{Definition}
\newtheorem{remark}[theorem]{Remark}
\theoremstyle{remark}
\newcommand{\A}{{\mathbb{A}}}
\newcommand{\N}{{\mathbb{N}}}
\newcommand{\R}{{\mathbb{R}}}
\renewcommand{\SS}{{\mathbb{S}}}
\newcommand{\Z}{{\mathbb{Z}}}
\newcommand{\Aa}{{\mathcal{A}}}   
\newcommand{\Ll}{{\mathcal{L}}}   
\newcommand{\Ss}{{\mathcal{S}}}
\newcommand{\Vv}{{\mathcal{V}}}
\newcommand{\Yy}{{\mathcal{Y}}}
\newcommand{\AAA}{\mbf{A}}       
\newcommand{\AAAdot}{\mbf{\dot A}} 
\newcommand{\BBB}{\mbf{B}}       
\newcommand{\rot}{\mathrm{rot}}  
\newcommand{\ppp}{\mbf{p}}       %
\newcommand{\id}{{\rm id}}                
\newcommand{\cgraph}[1]{\Gamma_{\kern-.5ex{}#1}}     
\newcommand{\lambdacan}{\lambda_{\rm can}} 
\newcommand{\omegacan}{\omega_{\rm can}}   
\newcommand{\Arnold}{{Arnol$'$d}}           
\newcommand{\h}{{\mathfrak h}}     
\newcommand{\Qfrak}{{\mathfrak Q}}
\newcommand{\inner}[2]{\langle #1, #2\rangle}   
\newcommand{\INNER}[2]{\left\langle #1, #2\right\rangle}
\newcommand{\mbf}[1]{\text{\boldmath $#1$}}  
\def\Nablatop#1{\nabla^{#1}\kern-.5ex{}}
\def\NABLA#1{{\mathop{\nabla\kern-.5ex\lower1ex\hbox{$#1$}}}}
\def\Nabla#1{\nabla\kern-.5ex{}_{#1}}
\def\Tabla#1{\Tilde\nabla\kern-.5ex{}_{#1}}
\def\Babla#1{\widebar\nabla\kern-.5ex{}_{#1}}
\def\abs#1{\mathopen|#1\mathclose|}   
\def\Abs#1{\left|#1\right|}
\renewcommand{\Tilde}{\widetilde}
\newcommand{\p}{{\partial}}
\renewcommand{\1}{{{\mathchoice {\rm 1\mskip-4mu l} {\rm 1\mskip-4mu l}
{\rm 1\mskip-4.5mu l} {\rm 1\mskip-5mu l}}}}
\newlength\eqshift
\renewcommand\theequation{\thesection.\arabic{equation}}
\let\savetheequation\theequation
\renewcommand*\env@matrix[1][\arraystretch]{%
  \edef\arraystretch{#1}%
  \hskip -\arraycolsep
  \let\@ifnextchar\new@ifnextchar
  \array{*\c@MaxMatrixCols c}}
\let\save@mathaccent\mathaccent
\newcommand*\if@single[3]{%
  \setbox0\hbox{${\mathaccent"0362{#1}}^H$}%
  \setbox2\hbox{${\mathaccent"0362{\kern0pt#1}}^H$}%
  \ifdim\ht0=\ht2 #3\else #2\fi
  }
\newcommand*\rel@kern[1]{\kern#1\dimexpr\macc@kerna}
\newcommand*\widebar[1]{\@ifnextchar^{{\wide@bar{#1}{0}}}{\wide@bar{#1}{1}}}
\newcommand*\wide@bar[2]{\if@single{#1}{\wide@bar@{#1}{#2}{1}}{\wide@bar@{#1}{#2}{2}}}
\newcommand*\wide@bar@[3]{%
  \begingroup
  \def\mathaccent##1##2{%
    \let\mathaccent\save@mathaccent
    \if#32 \let\macc@nucleus\first@char \fi
    \setbox\z@\hbox{$\macc@style{\macc@nucleus}_{}$}%
    \setbox\tw@\hbox{$\macc@style{\macc@nucleus}{}_{}$}%
    \dimen@\wd\tw@
    \advance\dimen@-\wd\z@
    \divide\dimen@ 3
    \@tempdima\wd\tw@
    \advance\@tempdima-\scriptspace
    \divide\@tempdima 10
    \advance\dimen@-\@tempdima
    \ifdim\dimen@>\z@ \dimen@0pt\fi
    \rel@kern{0.6}\kern-\dimen@
    \if#31
      \overline{\rel@kern{-0.6}\kern\dimen@\macc@nucleus\rel@kern{0.4}\kern\dimen@}%
      \advance\dimen@0.4\dimexpr\macc@kerna
      \let\final@kern#2%
      \ifdim\dimen@<\z@ \let\final@kern1\fi
      \if\final@kern1 \kern-\dimen@\fi
    \else
      \overline{\rel@kern{-0.6}\kern\dimen@#1}%
    \fi
  }%
  \macc@depth\@ne
  \let\math@bgroup\@empty \let\math@egroup\macc@set@skewchar
  \mathsurround\z@ \frozen@everymath{\mathgroup\macc@group\relax}%
  \macc@set@skewchar\relax
  \let\mathaccentV\macc@nested@a
  \if#31
    \macc@nested@a\relax111{#1}%
  \else
    \def\gobble@till@marker##1\endmarker{}%
    \futurelet\first@char\gobble@till@marker#1\endmarker
    \ifcat\noexpand\first@char A\else
      \def\first@char{}%
    \fi
    \macc@nested@a\relax111{\first@char}%
  \fi
  \endgroup
}
\newcommand{\Jbar}{{\widebar{J}}}     
\def\XXint#1#2#3{{\setbox0=\hbox{$#1{#2#3}{\int}$}
     \vcenter{\hbox{$#2#3$}}\kern-.5\wd0}}
\long\def\symbolfootnote[#1]#2{\begingroup%
\def\thefootnote{\fnsymbol{footnote}}\footnote[#1]{#2}\endgroup}
\tikzset{
  symbol/.style={
    draw=none,
    every to/.append style={
      edge node={node [sloped, allow upside down, auto=false]{$#1$}}}
  }
}
\begin{document}
\sloppy

\author{\quad Urs Frauenfelder \quad \qquad\qquad
             Joa Weber\footnote{
  Email: urs.frauenfelder@math.uni-augsburg.de
  \hfill
  joa@unicamp.br
  }
        %
        %
    \\
    Universit\"at Augsburg \qquad\qquad
    UNICAMP
}

\title{Merry-go-round \\ and \\ time-dependent symplectic forms}


\date{\today}

\maketitle 
%


%
%

%





\begin{abstract}
In the merry-go-round fictitious forces are acting
like centrifugal force and Coriolis force.
Like the Lorentz force Coriolis force is velocity dependent
and, following {\Arnold}~\cite{Arnold:1961a},
can be modeled by twisting the symplectic form.
If the merry-go-round is accelerated an additional fictitious force
shows up, the Euler force.

In this article we explain how one deals symplectically with
the Euler force by considering time-dependent symplectic forms.
It will turn out that to treat the Euler force one also needs
time-dependent primitives of the time-dependent symplectic forms.
\newline
Time-dependent symplectic forms are motivated by the
elliptic restricted three-body-problem and its relation to periodic
orbits around Mars.
\end{abstract}

\tableofcontents

\section{Introduction}

\subsection{Motivation and general perspective}

On a merry-go-round fictitious forces appear.
One of these fictitious forces is the centrifugal force.
The centrifugal force only depends on position and is a conservative
force, i.e. it is the gradient of a potential.
If one moves on a merry-go-round an additional fictitious force
is the Coriolis force. In contrast to the centrifugal force, the
Coriolis force depends linearly on the velocity.
There is a third fictitious force, the Euler force.
The Euler force only appears if the merry-go-round
is accelerated or decelerated.
As the centrifugal force the Euler force only depends on position,
but different from the centrifugal force the Euler force is not a
conservative force, i.e. it is not a gradient vector field.

\medskip
\textsc{Coriolis force -- Twisted symplectic form.}
It was observed by {\Arnold}~\cite{Arnold:1961a}
that velocity dependent forces, as the Lorentz force of a magnetic
field or the Coriolis force, can be modeled symplectically by
twisting the standard symplectic form on the cotangent bundle.
In this article we address the question how to model the Euler force
symplectically.
In an accelerated merry-go-round the Coriolis force
itself is time-dependent and therefore modeling it as {\Arnold} did,
the symplectic form on the cotangent bundle gets time-dependent.

\smallskip
\textsc{Euler force -- Time-dependent primitive.}
We explain in this paper that the Euler force can be modeled 
with the help of a time-dependent primitive of the time-dependent
symplectic form.
In particular, in contrast to the Coriolis force the choice of the
time-dependent primitive matters for the Euler force. This is vaguely
reminiscent of the Aharonov-Bohm effect which not only depends on the
magnetic field, but also on the choice of the vector potential.
On the other hand, we do not see a direct connection between
the Aharonov-Bohm effect and the Euler force.
The Aharonov-Bohm effect is a quantum mechanical phenomenon,
while what we discuss here is a classical phenomenon where
time-dependence of the magnetic field is crucial.

\medskip
\textsc{Gateway around Mars -- Time-periodic symplectic form.}
The study of accelerated merry-go-rounds is motivated
by applying symplectic methods to find a gateway around Mars.
This general perspective is explained in our current 
work~\cite{Frauenfelder:2026b,Frauenfelder:2026d}.
In fact, the orbit of Mars is rather eccentric.
Therefore the dynamics around Mars has to be modeled by the elliptic
restricted three-body-problem and not just the circular one.
In the elliptic restricted three-body-problem the rotational velocity
is not constant, like in accelerating and decelerating merry-go-rounds.
However, time dependence of acceleration and deceleration is periodic
with period 1 Martian year.
\\
With this motivation in mind we are interested in detecting periodic
orbits in periodically accelerating and decelerating merry-go-rounds.
For that purpose we are looking at symplectic forms
depending periodically on time, together with a choice of
time-dependent primitives.
The time dependence of the primitive itself does not need to be
periodic, but twisted-periodic in a sense specified in this paper
in Definition~\ref{def:tw-per}.

\smallskip
\textsc{Collisions -- Delay equation.}
Due to collisions, the Hamiltonians in the restricted
three-body-problems are singular.
However, two-body collisions can be regularized.
If the Hamiltonian is time-dependent, as in the elliptic restricted
three-body-problem, then there is no preserved energy
so that instead of blowing up the energy hypersurface
one has to blow up the loop space, as first discovered by
Barutello, Ortega, and  Verzini~\cite{Barutello:2021b}.
After regularizing using a blow-up of the loop space, 
the symplectic form as well as the equations become non-local.
This more general perspective, how to deal with non-local symplectic
forms, is the topic of our current
work~\cite{Frauenfelder:2026b,Frauenfelder:2026d}.

\subsection{Outline and main results}

In Section~\ref{sec:merry}
we treat a merry-go-round from the Hamiltonian point of view
and derive its Hamilton equation in phase space and the force
equation in configuration space.
In the force equation we will discover the three fictitious forces,
namely the centrifugal force, the Coriolis force, and the Euler force,
as illustrated by Figure~\ref{fig:fig-Coriolis}.
This force equation fits into a more general class of equations which
we refer to as the $(\AAA,\phi)$-equation.
Here $\AAA$ is a time-dependent vector potential of a time-dependent
magnetic field while $\phi$ is a time-dependent scalar potential.

\smallskip
In Section~\ref{sec:Lagrangian}
we are considering periodic solutions of the
$(\AAA,\phi)$-equation.
In order to find periodic solutions to the $(\AAA,\phi)$-equation
one needs some periodicity assumptions on our data.
We do not need that the vector potential $\AAA$ itself is periodic in time,
but only its differentials with respect to time as well as with
respect to space.
It is possible to trade-off the potential $\phi$
with a twist in the periodicity by defining a new vector potential
$\AAA^\phi$ which is not periodic in time, but twisted-periodic.
\\
We are then discussing how periodic solutions can be detected
variationally with the help of an action functional.
Since we do not assume that the vector potential itself is periodic,
an additional term arises in the action functional which we refer to
as the \emph{twist term}.
The twist term vanishes in the periodic~case.

\smallskip
While in Section~\ref{sec:Lagrangian} we discuss
a Lagrangian approach to periodic solutions of the
$(\AAA,\phi)$-equation, we study in Section~\ref{sec:Ham} a
Hamiltonian approach.
This Hamiltonian approach fits onto a more general class of
functionals which we discuss in more detail in
Section~\ref{sec:Euler-VF}.

\begin{figure}
  \centering
  \includegraphics
                             {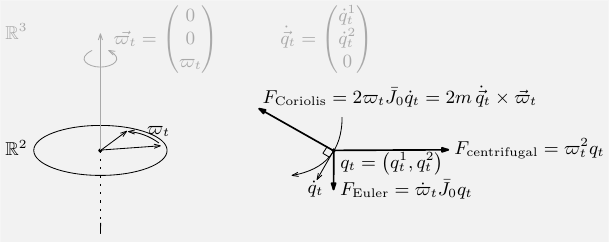}
  \caption{The three fictitious forces in the merry-go-round}
   \label{fig:fig-Coriolis}
\end{figure}

\smallskip
In Section~\ref{sec:Euler-VF}
we are considering a time-dependent family of primitives of
time-dependent symplectic forms.
Again this family does not need to depend periodically on time,
but twisted-periodically whose precise meaning is explained
in~(\ref{eq:tw-per}).
For such a twisted-periodic family of primitives we show how to
associate a well defined action functional.
We derive the critical point equation of this functional in two ways.
In the proof of Theorem~\ref{thm:Euler}
we use Cartan's formula on the product of the underlying
manifold with the circle.
In Section~\ref{sec:Cartan}
we give a different derivation of the critical point equation using
Cartan's formula on the loop space.
Since the loop space is infinite dimensional,
strictly speaking there is no mathematical theory yet
to use Cartan's formula there.
\\
It turns out that a new vector field $Y$ is entering the critical
point equation which, similarly as the Hamiltonian vector field $X$,
is implicitly defined with the help of the time-derivative
$\dot \lambda_t$ of the primitive.
We refer to this vector field $Y$ as the \emph{Euler vector field} in
view of its close relation with the Euler force.
In the second interpretation using the Cartan formula on the loop space
this derivative appears as the Lie derivative with respect to the
vector field on the free loop space which generates the rotation.

\medskip\noindent
{\bf Acknowledgements.}
UF~acknowledges support by DFG grant
FR~2637/5-1.

\section{Merry-go-round and fictitious forces}
\label{sec:merry}

We consider a free particle in the plane.
In an inertial system the free particle moves with constant velocity
along a straight line.
In particular, the Hamiltonian is just given by kinetic energy $T$.
There are no accelerations, hence no forces acting.
This changes as soon as we rotate the coordinate system with
time-dependent angular velocity
$\varpi\colon\R\to\R$. In this case apart from kinetic energy $T$
the particle has angular momentum and three
pseudo-forces appear, as illustrated by Figure~\ref{fig:fig-Coriolis},
namely
\begin{itemize}\setlength\itemsep{0ex}
\item
  \textbf{centrifugal force} depending on the locus $q$ and the angular
  velocity $\varpi_t$ is a gradient, hence conservative;
\item
  \textbf{Coriolis force} depending on the particle
  velocity $\dot q$ and $\varpi_t$;
\item
  \textbf{Euler force} depending on the locus $q$ and the
  angular acceleration $\dot \varpi_t$.
\end{itemize}

Let $\varpi\colon\R\to\R$ be a smooth function.
The function whose Hamiltonian vector field generates rotation
is angular momentum.
Hence in a rotating coordinate system with time-dependent angular
speed $\varpi_t$, say the $xy$-plane rotating anti-clockwise in space,
the Hamiltonian of a free particle, say of mass $m=1$,
consists of kinetic energy minus angular momentum, in symbols
\begin{equation*}
\begin{split}
   H^\varpi\colon\R^5
   \to\R
   ,\quad
   (t,q,\ppp)
   &\mapsto
   \tfrac12 \Abs{\ppp}^2
   -
   \INNER{
   \begin{pmatrix}0\\0\\\varpi_t \end{pmatrix}
   }
   {
\underbrace{
   \begin{pmatrix} q_1\\q_2\\0 \end{pmatrix}
   \times
   \begin{pmatrix} p_1\\p_2\\0\end{pmatrix}
}_{\text{angular momentum}}
   }
\\
   &\quad\;
   =\tfrac{p_1^2+p_2^2}{2}
   -\varpi_t(q_1p_2-q_2p_1)
   =:H^{\varpi_t}(q,\ppp)
\end{split}
\end{equation*}
where $\inner{\cdot}{\cdot}$ is the Eucliean inner product on $\R^3$
and $\abs{\cdot}$ is the induced norm.
After completing the squares $H^{\varpi}$ is of the form
\begin{equation*}
\begin{split}
   H^{\varpi_t}(q,\ppp)
   &=\underbrace{\tfrac12\left(\left(p_1+\varpi_t q_2\right)^2
   +\left(p_2-\varpi_t q_1\right)^2\right)}_{T(\ppp-\AAA_t|_q)}
   \underbrace{-\tfrac12 \varpi_t^2\Abs{q}^2}_{+\phi_t(q)}
\end{split}
\end{equation*}
where the time-dependent vector potential and the potential are
\begin{equation}\label{eq:AAA-phi}
   \AAA_t|_q=\AAA_{\varpi_t}|_q
   =\begin{pmatrix}-\varpi_t q_2\\\varpi_tq_1\end{pmatrix}
   =\varpi_t J_0 q
   ,\qquad
   \phi_t|_q=-\tfrac12 \varpi_t^2\Abs{q}^2 .
\end{equation}
The anti-/clockwise quarter rotation 
in the plane is encoded by the matrices
$$
    J_0:=\begin{pmatrix}0&-1\\1&0\end{pmatrix}
   ,\qquad
   \Jbar_0 :=-J_0=\begin{pmatrix}0&1\\-1&0\end{pmatrix}
   ,
$$
where $J_0$ is the canonical complex structure on $\R^2$.
The \textbf{Hamilton equations} for $H^\varpi$ with respect to the
\textbf{canonical symplectic form} $\omegacan=d\lambdacan$ are
\begin{equation}\label{eq:H-eqs}
  \left\{
   \begin{aligned}
     \dot q
     &=\p_p H^{\varpi_t}
     &&=\ppp-\AAA_t|_q
\\
     \dot\ppp
     &=-\p_q H^{\varpi_t}
     &&=\sum_{j=1}^2(p_j-A_t^j)\Nabla{}\AAA_t^j|_q
     -\Nabla{}\phi_t|_q
     .
   \end{aligned}
   \right.
\end{equation}
This is a first order ODE for smooth maps $(q,\ppp)\colon\R\to\R^4$.
We eliminate $\ppp$ to get a second order ODE in $q$.
For the following calculation we simplify notation $A_1:=A_t^1$ and
$A_2:=A_t^2$ as well as $\p_1:=\p_{q_1}$ and $\p_2:=\p_{q_2}$.
Differentiating the first Hamilton equation
with respect to $t$ and then using the second one for $\dot\ppp$
we obtain for $q$ the second order ODE
\begin{equation*}
\begin{split}
   \ddot q
   &=\dot \ppp-\tfrac{d}{dt} \AAA_t|_q
\\
   &=-\p_q H^{\varpi_t}(q,\ppp)
   -\AAAdot_t|_q-d\AAA_t|_q\,\dot q
\\
   &=-\p_q\left(\tfrac12 \Abs{\ppp-\AAA_t(q)}^2\right)
   -\Nabla{}\phi_t|_q
   -\AAAdot_t|_q-d\AAA_t|_q\,\dot q
\\
   &=-\begin{pmatrix}
      \p_1 \tfrac{(p_1-A_1)^2+(p_2-A_2)^2}{2} \\
      \p_2 \tfrac{(p_1-A_1)^2+(p_2-A_2)^2}{2}
   \end{pmatrix}
   -\Nabla{}\phi_t
   -\AAAdot_t
   -\begin{pmatrix}
      \p_1A_1 & \p_2A_1 \\
      \p_1A_2 & \p_2A_2
   \end{pmatrix}
   \begin{pmatrix}
      \dot q_1\\\dot q_2
   \end{pmatrix}
\\
   &=
   \small
   \begin{pmatrix}
      \underline{(p_1-A_1)\p_1A_1}+(p_2-A_2)\p_1A_2 \\
      (p_1-A_1)\p_2A_1+\underline{ \underline{ (p_2-A_2)\p_2A_2}}
   \end{pmatrix}
   -\begin{pmatrix}
      \underline{ (\p_1A_1)\dot q_1}+(\p_2A_1)\dot q_2 \\
      (\p_1A_2)\dot q_1+\underline{ \underline{ (\p_2A_2)\dot q_2}}
   \end{pmatrix}
   -\Nabla{}\phi_t
   -\AAAdot_t
\\
   &=\begin{pmatrix}
      (\p_1A_2-\p_2A_1)\dot q_2 \\
      (\p_2A_1-\p_1A_2)\dot q_1
   \end{pmatrix}
   -\Nabla{}\phi_t
   -\AAAdot_t
   \qquad
   {\color{gray},\,\p_1A_2-\p_2A_1=:\rot\,\AAA_t}
\\
   &=\left(\rot\, \AAA_t\right)
   \begin{pmatrix} 0&1\\-1&0 \end{pmatrix}
   \begin{pmatrix} \dot q_1\\\dot q_2 \end{pmatrix}
   -\Nabla{}\phi_t
   -\AAAdot_t
\\
   &=-\left(\rot\, \AAA_t|_q\right) J_0\dot q-\AAAdot_t|_q
   -\Nabla{}\phi_t|_q
{\color{gray}\;
   =\ddot q
   .
}
\end{split}
\end{equation*}
Underlined terms cancel due to the first Hamilton equation.
Thus~(\ref{eq:H-eqs}) $\Rightarrow$~(\ref{eq:q}).

\begin{lemma}\label{le:qp-q}
The first order ODE~(\ref{eq:H-eqs})
is equivalent to the second order~ODE
\begin{equation}\label{eq:q}
   \ddot q
   =
   -\left(\rot\,\AAA_t|_q\right) J_0\dot q
   -\AAAdot_t|_q
   -\Nabla{}\phi_t|_q .
\end{equation}
We call~(\ref{eq:q}) the \textbf{\boldmath$(\AAA,\phi)$-equation}.
It makes sense for
smooth $\phi\colon\R\times\R^2\to\R$.
\end{lemma}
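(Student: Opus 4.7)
The plan is to eliminate the momentum $\ppp$ from the Hamilton system (\ref{eq:H-eqs}) and verify that the resulting second-order equation for $q$ alone coincides with (\ref{eq:q}). Since the Hamiltonian $H^{\varpi_t}$ is quadratic in $\ppp$ with an affine shift by $\AAA_t(q)$, the first Hamilton equation $\dot q = \ppp - \AAA_t(q)$ is already algebraic in $\ppp$ and inverts to $\ppp = \dot q + \AAA_t(q)$. This gives a bijection between pairs $(q,\ppp)\colon\R\to\R^4$ satisfying the first Hamilton equation and arbitrary smooth curves $q\colon\R\to\R^2$, so it suffices to check that, under this correspondence, the second Hamilton equation is equivalent to (\ref{eq:q}).

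For the forward direction I would differentiate the first Hamilton equation in $t$, obtaining $\ddot q = \dot\ppp - \AAAdot_t(q) - d\AAA_t|_q\,\dot q$, and then substitute the second Hamilton equation for $\dot\ppp$. Expanding $\p_q\bigl(\tfrac12 |\ppp - \AAA_t(q)|^2\bigr)$ componentwise and using $\ppp - \AAA_t(q) = \dot q$ turns every factor $(p_j - A_t^j)$ into $\dot q_j$, so the resulting quadratic terms $\dot q_j\,\p_i A_t^j$ combine with the entries of the Jacobian $d\AAA_t|_q\,\dot q$. The diagonal contributions $(\p_i A_t^i)\dot q_i$ cancel pairwise, while the off-diagonal terms assemble into $(\p_1 A_t^2 - \p_2 A_t^1)\,J_0\dot q$, that is, into the Coriolis-type term $-(\rot\,\AAA_t|_q)J_0\dot q$ appearing in (\ref{eq:q}). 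The remaining contributions $-\AAAdot_t|_q$ and $-\Nabla{}\phi_t|_q$ come directly from the substitution, with no further manipulation.

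The converse direction is essentially free: given a smooth $q$ solving (\ref{eq:q}), the definition $\ppp := \dot q + \AAA_t(q)$ makes the first Hamilton equation hold tautologically, and running the above computation backwards recovers the second Hamilton equation from (\ref{eq:q}). The only step requiring any care is the diagonal/off-diagonal bookkeeping in the planar gradient expansion, which is where the two-dimensional curl emerges; this is elementary but is the one place where a sign or index error would break the equivalence, so I would write out the $2{\times}2$ matrix computation explicitly and record $\rot\,\AAA_t := \p_1 A_t^2 - \p_2 A_t^1$ as a lemma-internal shorthand.
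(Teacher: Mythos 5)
Your proposal follows the paper's own proof essentially verbatim: differentiate the first Hamilton equation, substitute the second for $\dot\ppp$, use $\ppp-\AAA_t(q)=\dot q$ to cancel the diagonal terms and assemble the off-diagonal ones into the planar curl, exactly as in the displayed computation preceding the lemma (your explicit remark on the converse, via $\ppp:=\dot q+\AAA_t(q)$, is a welcome small addition the paper leaves implicit). One caveat: in your prose the intermediate expression $\left(\p_1 A_t^2-\p_2 A_t^1\right)J_0\dot q$ is inconsistent with the final term $-\left(\rot\,\AAA_t|_q\right)J_0\dot q$ --- the matrix that actually emerges is $\Jbar_0=-J_0$ --- but since you explicitly defer to writing out the $2\times 2$ computation, this reads as a typo rather than a gap.
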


\begin{proof}
We already showed that~(\ref{eq:H-eqs}) $\Rightarrow$~(\ref{eq:q}).
Vice versa, let $q$ solve~(\ref{eq:q})
and define $\ppp:=\dot q+\AAA_t|_q$. Hence
$\dot\ppp=\ddot q+\AAAdot_t|_q+d\AAA_t|_q \dot q$.
Now read the earlier displayed calculation backwards to see that
$\ddot q=-\p_q H^{\varpi_t}(q,\ppp)-\AAAdot_t|_q-d\AAA_t|_q\,\dot q$.
Hence $\dot\ppp=-\p_q H^{\varpi_t}(q,\ppp)$ and this
proves~(\ref{eq:H-eqs}) and Lemma~\ref{le:qp-q}.
\end{proof}

\begin{corollary}\label{cor:merry}
In the merry-go-round case ($\AAA$, $\phi$
given by~(\ref{eq:AAA-phi})) we have 
$$
   \rot\,\AAA_t|_q
   :=\p_1 A_t^2|_q-\p_2 A_t^1|_q
   =2\varpi_t
   ,\qquad
   \AAAdot_t|_q
   =\dot\varpi_t J_0 q
   ,\qquad
   \Nabla{}\phi|_q
   =-\varpi_t^2 q ,
$$
so that the $(\AAA,\phi)$-equation becomes
\begin{equation*}
\begin{split}
   \ddot q
   &=-2\varpi_t J_0\dot q
   -\dot\varpi_t J_0 q
   +\varpi_t^2 q
\\
   &=\underbrace{2\varpi_t \Jbar_0\dot q}_{\rm Coriolis}\;
   +\;\underbrace{\dot\varpi_t \Jbar_0 q}_{\rm Euler}\;
   +\underbrace{\varpi_t^2 q}_{\rm centrifug.} .
\end{split}
\end{equation*}
\end{corollary}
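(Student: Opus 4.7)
The plan is to verify the corollary by direct substitution into the $(\AAA,\phi)$-equation from Lemma~\ref{le:qp-q}, using the explicit formulas~(\ref{eq:AAA-phi}) for the time-dependent vector potential and scalar potential. Since the corollary consists of three separate computations that are then combined into a single equation of motion, I would organize the proof as three short calculations followed by one assembly step.

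First I would compute $\rot\,\AAA_t|_q$. From $\AAA_t(q)=(-\varpi_t q_2,\varpi_t q_1)$ one reads off $\AAA_t^1=-\varpi_t q_2$ and $\AAA_t^2=\varpi_t q_1$, so $\p_1 \AAA_t^2=\varpi_t$ and $\p_2\AAA_t^1=-\varpi_t$, giving $\rot\,\AAA_t|_q=\varpi_t-(-\varpi_t)=2\varpi_t$. Second, I would compute $\AAAdot_t|_q$: since the $q$-dependence of $\AAA_t$ is linear via $J_0$ and only the scalar factor $\varpi_t$ depends on $t$, differentiating through $t$ immediately yields $\AAAdot_t|_q=\dot\varpi_t J_0 q$. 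Third, the gradient of $\phi_t(q)=-\tfrac12\varpi_t^2|q|^2$ is $\Nabla{}\phi_t|_q=-\varpi_t^2 q$, using that $\varpi_t^2$ is a constant in $q$.

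With these three ingredients in hand, the assembly is purely mechanical: substitute into
\[
   \ddot q
   =-\left(\rot\,\AAA_t|_q\right) J_0\dot q
   -\AAAdot_t|_q
   -\Nabla{}\phi_t|_q
\]
to obtain $\ddot q=-2\varpi_t J_0\dot q-\dot\varpi_t J_0 q+\varpi_t^2 q$. Using $\Jbar_0=-J_0$ rewrites the first two summands in the desired form, identifying them as the Coriolis and Euler contributions respectively, while the last summand is the centrifugal term. No step presents a genuine obstacle; the only point requiring minor care is the sign bookkeeping when passing from $-J_0$ to $\Jbar_0$ in the two velocity-linear and position-linear terms, and confirming that the gradient of $-\tfrac12\varpi_t^2|q|^2$ carries the sign that turns $-\Nabla{}\phi_t$ into $+\varpi_t^2 q$.
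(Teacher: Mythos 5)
Your proposal is correct and follows the same route the paper takes: the corollary is obtained by computing $\rot\,\AAA_t$, $\AAAdot_t$, and $\Nabla{}\phi_t$ directly from the explicit formulas~(\ref{eq:AAA-phi}) and substituting them into the $(\AAA,\phi)$-equation of Lemma~\ref{le:qp-q}, then rewriting via $\Jbar_0=-J_0$. All three component calculations and the sign bookkeeping match the paper's.
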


\newpage
\boldmath
\section[Lagrangian variational approach to periodic solutions]
{Lagrangian variational approach to periodic solutions of the $(\AAA,\phi)$-equation}
\label{sec:Lagrangian}
\unboldmath

In this section we are looking at periodic solutions of the
equation~(\ref{eq:q}), namely
$$
   \ddot q
   =
   -\left(\rot\,\AAA_t|_q\right) J_0\dot q-\AAAdot_t|_q
   -\Nabla{}\phi_t|_q .
$$
In this article \textbf{periodic} means $1$-periodic
and we identify $\SS^1$ with $\R/\Z$.
To make sense of periodicity of solutions of the
$(\AAA,\phi)$-equation we need that all three $\left(\rot\,\AAA_t\right)$,
$\AAAdot_t$, and $\phi_t$ are periodic in time $t$.
We do not need that $\AAA_t$ itself is periodic in $t$, but we need
some twisted-periodicity as explained next.

\begin{definition}[Twisted-periodic $1$-form]
\label{def:twisted-1-form}
Let $\Qfrak\subset\R^2$ be an open subset.
A \textbf{twisted-periodic \boldmath$1$-form}
$\theta=\{\theta_t\}_{t\in\R}$
is a smooth real family of $1$-forms on $\Qfrak$
\begin{equation*}
   \theta_t=A_t^1\, dq_1+A_t^2\, dq_2
   ,\qquad
   \AAA_t=\left(A_t^1, A_t^2\right)\colon\Qfrak\to\R^2
   ,
\end{equation*}
such that $\theta$ is \textbf{twisted-periodic} in the sense that a)~the
$t$-derivative~is~periodic
\begin{equation}\label{eq:df}
   \text{a)~$\dot\theta_{t+1}=\dot\theta_t$}
   ,\qquad
   \text{b)~$\theta_{t+1}=\theta_t+df_t$}
   ,
\end{equation}
and b)~for every time $t$ there is a smooth so-called
\textbf{twist function} $f_t\colon\Qfrak\to \R$.
A \textbf{time-constant} choice, e.g. $f:=f_0$,
is a twist function for $\theta$, too;
cf.~(\ref{eq:tw-per}).
\end{definition} 

Consequently the exterior derivative is periodic, in symbols
$
   d\theta_{t+1}=d\theta_t
$.
Furthermore, taking the time derivative of~(\ref{eq:df})$_{\mathrm{b}}$
we get $d\dot f_t=\dot\theta_{t+1}-\dot\theta_t=0$.
Thus $\dot f_t(q)$ is \textbf{locally constant} in $q$,
not necessarily in~$t$.
If $\theta_{t+1}=\theta_t$ is periodic, then the zero function is a
twist function, indeed $d\theta_{t+1}=d\theta_t+d 0$.
Viewing the plane as a subspace of space $\R^2\times\{0\}\subset \R^3$
we use the coefficients of the twisted $1$-form $\theta$ on the plane
to define a vector field in space by
 $$
   \left(\AAA_t,0\right)|_{(q_1,q_2,q_3)}:=\left(A_t^1|_{(q_1,q_2)},
   A_t^2|_{(q_1,q_2)},0\right)
$$
called a \textbf{vector potential}.

\begin{remark}
Magnetic fields are described by closed $2$-forms $\sigma$,
where closedness $d\sigma=0$ encodes the fact that there are no
magnetic charges; see e.g.~\cite[\S\,2.4.1]{Weber:2017b}.
For a twisted-periodic $1$-form $\theta$ the family of $2$-forms
\begin{equation}\label{eq:d-theta}
\begin{split}
   \sigma_t:=d\theta_t
   &=d\left(A_t^1\, dq_1+A_t^2\, dq_2\right)
   =\underbrace{\left(\p_1A^2_t-\p_2A^1_t\right)}_{=: \rot\,\AAA_t}
    dq_1\wedge dq_2
\end{split}
\end{equation}
is time-periodic 
$
   \sigma_t:=d\theta_t=d\theta_{t+1}=\sigma_{t+1}
$
and closed $d\sigma_t=dd\theta_t=0$.
The magnetic vector field corresponding to the magnetic $2$-form
is $\BBB_t:=(*\sigma)^\#$, i.e.
$$
   \BBB_t:=\underbrace{\nabla\times}_{=\,\rot}
   \begin{pmatrix}\AAA_t\\0\end{pmatrix}
   =
   \begin{pmatrix} \p_{q_1}\\\p_{q_2}\\\p_{q_3} \end{pmatrix}
   \times
   \begin{pmatrix} A_t^1\\A_t^2\\0\end{pmatrix}
   =\begin{pmatrix} -\p_{q_3} A_t^2\\\p_{q_3} A_t^1\\
      \p_1 A_t^2-\p_2 A_t^1\end{pmatrix}
   =\begin{pmatrix} 0\\0\\\rot\,\AAA_t\end{pmatrix} .
$$
This shows that the magnetic field is
perpendicular to the $q_1$-$q_2$-plane in $\R^3$.
\end{remark}

\subsection{Classical action functional}\label{sec:class-action}

\begin{definition}\label{def:class-action}
Let $\{\theta_t\}$ be a twisted-periodic $1$-form on
$\Qfrak\subset\R^2$ open.
Let $\phi\colon \SS^1\times\Qfrak\to\R$ be a smooth map.
Define the \textbf{classical action functional} by
\begin{equation*}
\begin{split}
   \Ss=\Ss_{L^\theta}\colon\Lambda\Qfrak:=W^{1,2}(\SS^1,\Qfrak)
   &\to\R
\\
   q
   &\mapsto\int^1_0
   \underbrace{\tfrac12\Abs{\dot q_t}^2
   +\theta_t|_{q_t}\dot q_t-\phi_t|_{q_t}}_{=L^\theta_t(q_t,\dot q_t)} \,
   dt
   -f_0(q_0)
   .
\end{split}
\end{equation*}
For $f_0$ see~(\ref{eq:df}).
The function $L^\theta_t\colon T\Qfrak\to\R$, called \textbf{Lagrangian},
is defined 
by
$$
   L^\theta_t(q,v)
   :=\tfrac12\Abs{v}^2+\theta_t|_{q} v-\phi_t(q)
   .
$$
\end{definition}

\begin{remark}[twist term and integration interval]
\label{rem:twist-term}

(i)~Given the integration interval $[\underline{0},1]$,
it is important to subtract the
\textbf{twist term \boldmath$f_{\underline{0}}(q_{\underline{0}})$}
to get as critical points the periodic solutions of the
$(\AAA,\phi)$-equation
(Propositions~\ref{prop:integration-HTheta}--\ref{prop:crit-Ss_L}).

(ii)~If $\theta_t$ is twisted-periodic, and not periodic, then $L_t^\theta$
is not periodic in time.
Therefore the integral defining $\Ss(q)$
is not over the circle $\SS^1$,
but defined over the interval $[0,1]$.
Given $r\in\R$, using $\int_r^{r+1}$ in the definition of $\Ss(q)$
causes that $\Ss(q)$ depends on $r$ modulo $1$:
indeed shift $t$ by $1$ to obtain equality one
\begin{equation*}
\begin{split}
   \int_{r+1}^{r+2}\theta_t|_{q_t}\dot q_t \, dt
   -\int_r^{r+1}\theta_t|_{q_t}\dot q_t \, dt
   &\stackrel{{\color{gray}1}}{=}
   \int_{r}^{r+1}
   \underbrace{\theta_{t+1}}_{\theta_{t}+df_t}
   |_{\underbrace{q_{t+1}}_{q_t}}
   \underbrace{\dot q_{t+1}}_{\dot q_{t}} \, dt
   -\int_r^{r+1}\theta_t|_{q_t}\dot q_t \, dt
\\
   &\stackrel{{\color{gray}2}}{=}
   \int_{r}^{r+1}
   \left(
   \left.\tfrac{d}{dt}\right|_{0} f_t(q_t)
   -\dot f_t(q_t)
   \right)
   dt
\\
   &\stackrel{{\color{gray}3}}{=}
   f_{r+1}(q_{r+1})
   -f_r(q_r)
   -\int_{r}^{r+1}\underbrace{\dot f_t(q_t)}_{\dot f_t(q_{\color{red}r})}\, dt
   = 0
   .
\end{split}
\end{equation*}
After equality~1 we use periodicity $q_{r+1}=q_r$ and twisted-periodicity. 
Equality~2 cancels the two $\theta_t$ summands with opposite signs
and rewrites $df_t|_{q_t}\dot q_t$ by the chain rule.
Equalities~3 and four use the fundamental theorem of calculus.
After equality~3 we use that $\dot f_t$ is locally
constant, so we may replace $q_t$ by $q_{\color{red}r}$.

(iii)~If $\theta_{t+1}=\theta_t$ is periodic, then there is no twist
term ($f\equiv 0$) and the integrals
$\int_r^{r+1} L_t^\theta(q_t,\dot q_t)dt=\int_0^1 L_t^\theta(q_t,\dot q_t)dt$
are equal for any $r\in\R$.\footnote{
  schematically
  $\int_r^{r+1}\theta_t=\int_r^1 \theta_t+\int_1^{r+1}\theta_t
  =\int_r^1 \theta_t+\int_0^r \theta_{t+1}=\int_0^1 \theta_t$
  as $\theta_{t+1}=\theta_t$ is periodic
  }
\end{remark}

\begin{definition}[2-form $\Sigma:=D\varTheta$ on $\SS^1\times \Qfrak$]
\label{def:Theta}
A twisted-periodic $1$-form $\{\theta_t\}_{t\in\R}$ on
$\Qfrak\subset\R^2$ induces a $1$-form $\varTheta$
on $\R\times \Qfrak$ only, not $\SS^1\times \Qfrak$, namely
\begin{equation*}
   \varTheta|_{(t,x)}
   \begin{pmatrix}\tau\\\xi\end{pmatrix}
   :=\theta_t|_x\xi
   \qquad
   \forall (t,x)\in \R\times \Qfrak
   \quad
   \forall (\tau,\xi)\in \R\times T_x \Qfrak
   .
\end{equation*}
The exterior derivative on $\R\times \Qfrak$,
notation $D$, is given by
\begin{equation*}
\begin{split}
    D\varTheta|_{(t,x)}
   \left(\begin{pmatrix}\tau \\\xi\end{pmatrix},
   \begin{pmatrix}\sigma\\\eta\end{pmatrix}\right)
   &=d\theta_t|_x (\xi,\eta)
   +\left(dt\wedge \dot\theta_t|_x\right)
   \left(\begin{pmatrix}\tau \\\xi\end{pmatrix},
   \begin{pmatrix}\sigma\\\eta\end{pmatrix}\right)
\\
   &=d\theta_t|_x (\xi,\eta)
   +\dot\theta_t|_x\eta\cdot \tau 
   -\dot\theta_t|_x\xi\cdot \sigma .
\end{split}
\end{equation*}
By twisted periodicity~(\ref{eq:df}) this is a $2$-form
on $\SS^1\times\Qfrak$ notation $\Sigma:=D\varTheta$.
\end{definition}

The following proposition gives a different description of the
magnetic part of the functional which is useful to compute the
critical points in Section~\ref{sec:crit}.

\begin{proposition}[Magnetic part -- alternative description]
\label{prop:integration-HTheta}
Let $\theta$ and $\Sigma$ be as in Definition~\ref{def:Theta}.
Fix a base loop $b\in\Lambda\Qfrak$.
Then every loop $q$ homotopic to $b$ in $\Lambda\Qfrak$
through a smooth homotopy, say $h$ from $b$ to $q$,
satisfies the identity
$$
   \int_{[0,1]} q^*\theta
   -f_0(q_0)
   =\int_R H^*\Sigma
   +\int_{[0,1]} b^*\theta
   -f_0(b_0)
$$
with $H\colon R=[0,1]\times[0,1]\to \SS^1\times\Qfrak$,
$(r,t)\mapsto\left(t,h(r,t)\right)$ and $f_0$ from~(\ref{eq:df}).
\end{proposition}

\begin{proof}
Let $h\colon Z=[0,1]\times \SS^1\to\Qfrak$,
$(r,t)\mapsto h(r,t)$, be a smooth homotopy of loops from
$b=h(0,\cdot)$ to $q=h(1,\cdot)$.
Consider the induced homotopy $H$ between loops $B$ and $Q$ in
$\SS^1\times\Qfrak$ defined by
\begin{equation}\label{eq:H-Q-B}
   H(r,t):=\begin{pmatrix}t\\h(r,t)\end{pmatrix}
   ,\quad
   B_t:=\begin{pmatrix}t\\b_t\end{pmatrix}=H(0,t)
   ,\quad
   Q_t:=\begin{pmatrix}t\\q_t\end{pmatrix}=H(1,t)
   .
\end{equation}
We use Definition~\ref{def:Theta} of $\varTheta$ in each of the following
two calculations
\begin{equation}\label{eq:Q}
\begin{split}
   \int_{[0,1]} Q^*\varTheta
   =\int_0^1
   \varTheta|_{(t,q_t)}\begin{pmatrix}1\\\dot q_t\end{pmatrix}
   \, dt
   =
   \int_0^1
   \theta_t|_{q_t} \dot q_t
   \, dt
   =\int_{[0,1]} q^*\theta
\end{split}
\end{equation}
and
\begin{equation*}
\begin{split}
   \int_{[0,1]} H(\cdot,1)^*\varTheta
   =\int_0^1
   \varTheta|_{(1,h(r,1))}\begin{pmatrix}0\\\p_1h(r,1)\end{pmatrix}
   \, dr
   &=
   \int_0^1
   \theta_1|_{h(r,1)} \p_1h(r,1)
   \, dr
\\
   &=\int_{[0,1]} h(\cdot, 1)^*\theta_1
   .
\end{split}
\end{equation*}
Similarly for $B^*\varTheta$ and $H(\cdot,0)^*\varTheta$.
%
%
The rectangle boundary $\p R$ has four 
segments
$$
   \Gamma_1=\{1\}\times [0,1]_t
   ,\;\;
   \Gamma_2=[0,1]_r\times \{1\}
   ,\;\;
   \Gamma_3=\{0\}\times [0,1]_t
   ,\;\;
   \Gamma_4=[0,1]_r\times \{0\}
   ,
$$
endowed with the induced (anti-clockwise) boundary orientation.
Exactness $\Sigma=d\varTheta$ and as
exterior differentiation $D$ and pull-back commute, Stokes yields
\begin{equation*}
\begin{split}
   \int_{R} H^*\Sigma
   &=  
   \int_{\p R=\Gamma_1\cup \Gamma_2\cup \Gamma_3\cup \Gamma_4} H^*\varTheta
\\
   &= 
   \int_{\Gamma_1} Q^*\varTheta
   +\int_{\Gamma_2} H(\cdot,1)^*\varTheta
   +\int_{\Gamma_3} B^*\varTheta
   +\int_{\Gamma_4} H(\cdot,0)^*\varTheta
\\
   &\stackrel{{\color{gray}3}}{=}
   \int_{t=0}^1q^*\theta
   -\int_{r=0}^1
      (\underbrace{h(\cdot,1)}_{=h(\cdot,0)})^*\theta_1
   -\int_{t=0}^1b^*\theta
   +\int_{r=0}^1 h(\cdot,0)^*\theta_0
\\
   &\stackrel{{\color{gray}4}}{=}
   \int_{t=0}^1q^*\theta
   -\int_{t=0}^1b^*\theta
   -\int_{r=0}^1 
   \underbrace{h(\cdot,0)^*\left(\theta_1-\theta_{\underline{0}}\right)}
      _{=h(\cdot,0)^* df_{\underline{0}}=d h(\cdot,0)^* f_{\underline{0}}}
\\
\end{split}
\end{equation*}
\begin{equation*}
\begin{split}
   &\stackrel{{\color{gray}5}}{=}
   \int_{t=0}^1q^*\theta
   -\int_{t=0}^1b^*\theta
   -f_{\underline{0}}\circ\underbrace{h(1,0)}_{q_0}
   +f_{\underline{0}}\circ\underbrace{h(0,0)}_{b_0}
   .
\end{split}
\end{equation*}
Equality~3 parametrizes the four segments 
$\Gamma_1,\dots, \Gamma_4$ by $[0,1]$
taking care of orientations, then the previously prepared formulas
such as for $\int Q^*\varTheta$ are used.
Note that $h(\cdot,1)\equiv h(\cdot,0)$ since $t\in\SS^1$.
Equality~4 is by linearity of integral and pull-back.
Now twisted-periodicity~(\ref{eq:df}) is used
and $d$ and pull-back are interchanged.
Equality~5 is by Stokes' theorem.
This proves Proposition~\ref{prop:integration-HTheta}.
\end{proof}

\subsection{Critical points}\label{sec:crit}

\begin{proposition}\label{prop:crit-Ss_L}
The critical points of the classical action functional
$\Ss\colon\Lambda\Qfrak\to\R$, Definition~\ref{def:class-action},
are the solutions of the $(\AAA,\phi)$-equation, namely
\begin{equation*}
   \ddot q=-\left(\rot\,\AAA_t|_q\right) J_0\dot q
      -\AAAdot_t|_q -\Nabla{}\phi_t|_q
\end{equation*}
for smooth loops $q\colon\SS^1\to\Qfrak$.
\end{proposition}

\begin{proof}
Pick $q\in \Lambda\Qfrak$.
Fix a base loop $b\in \Lambda\Qfrak$ smoothly homotopic 
to $q$ in $\Lambda\Qfrak$.
We move from $\Qfrak$ to $\SS^1\times \Qfrak$
and define $\varTheta$ and $D\varTheta$
as in Definition~\ref{def:Theta}.
Loops $b$ and $q$ in $\Qfrak$ induce loops $B$ and $Q$ in $\SS^1\times
\Qfrak$, and a vector field $\xi$ along $q$ induces a vector field $\Xi$
along $Q$, namely
$$
   B_t=\begin{pmatrix}t\\b_t\end{pmatrix}
   ,\qquad
   Q_t=\begin{pmatrix}t\\q_t\end{pmatrix}
   ,\qquad
   \dot Q_t =\begin{pmatrix}1\\\dot q_t\end{pmatrix}
   ,\qquad
   \Xi=\begin{pmatrix}0\\\xi\end{pmatrix}
   .
$$
Let $h\colon Z=[0,1]\times \SS^1\to\Qfrak$,
$(r,t)\mapsto h(r,t)$, be a smooth homotopy of loops from
$b=h(0,\cdot)$ to $q=h(1,\cdot)$.
Consider the induced homotopy
$H\colon [0,1]\times[0,1]\to \SS^1\times\Qfrak$
between loops $B$ and $Q$ in
$\SS^1\times\Qfrak$ defined by~(\ref{eq:H-Q-B}).

\smallskip\noindent
Now we construct variations with real parameter $\tau$.
We vary the point $q$ by $q_{(\tau)}:=q+\tau\xi$ and
we add to the homotopy $h$ from $b$ to $q$
the linear homotopy from $q$ to $q_{(\tau)}$ extending the parameter
interval to $r\in[0,1+\tau]$, in symbols
\begin{equation}\label{eq:h}
   h_{(\tau)}(r,\cdot)
   :=
   \begin{cases}
      h(r,\cdot)&\text{, $r\in[0,1]$,}\\
      q+(r-1)\xi &\text{, $r\in[1,1+\tau]$.}
   \end{cases}
\end{equation}
We lift $q_{(\tau)}$ to $Q_{(\tau)}=Q+\tau\Xi=(\cdot,q+\tau\xi)$
and $h_{(\tau)}$ to $H_{(\tau)}(r,t):=(t,h_{(\tau)}(r,t))$.

\medskip\noindent
For $\Sigma=D\varTheta$ from Definition~\ref{def:Theta}
we determine the derivative of the magnetic part
in the alternative description of
Proposition~\ref{prop:integration-HTheta}.
We compute, using in equality~1 that only the first summand depends
on $\tau$, the following $\tau$-derivative
\begin{equation*}
\begin{split}
   &\left.\frac{d}{d\tau}\right|_{0}
   \biggl(
   \int_{[0,1+\tau]\times\SS^1}H_{(\tau)}^*\Sigma+\int_{[0,1]} b^*\theta-f_0(b_0)
   \biggr)
\\
   &\stackrel{{\color{gray}1}}{=}
   \lim_{\tau\to0}\frac{1}{\tau}
   \biggl (
    \int_{[0,1+\tau]\times\SS^1}H_{(\tau)}^*\Sigma- \int_{[0,1]}H_{(\tau)}^*\Sigma
   \biggr)
\\
   &=\lim_{\tau\to0}\frac{1}{\tau}
   \int_{[1,1+\tau]\times\SS^1}H_{(\tau)}^*\Sigma
\\
   &=\lim_{\tau\to0}\frac{1}{\tau}
   \int_{r=1}^{1+\tau}\int_{t=0}^1
   \Sigma_{H_{(\tau)}(r,t)}\left(\p_r H_{(\tau)}(r,t),\p_t H_{(\tau)}(r,t))\right) dt\, dr
\\
\end{split}
\end{equation*}
\begin{equation*}
\begin{split}
   &=
   \lim_{\tau\to0}\frac{1}{\tau}
   \int_{r=1}^{1+\tau}\int_{t=0}^1
   D\varTheta_{(t,q_t+(r-1)\xi_t)}\left(\begin{pmatrix}0\\\xi_t\end{pmatrix},
   \begin{pmatrix}1\\\dot q_t+(r-1)\dot\xi_t\end{pmatrix}\right) dt\, dr
\\
   &\stackrel{{\color{gray}5}}{=}
   \lim_{\tau\to0}\frac{1}{\tau}
   \int_{r=1}^{1+\tau}\int_{0}^1
   \left(
   d\theta_t|_{q_t+(r-1)\xi_t}\bigl(\xi_t,\dot q_t+(r-1)\dot\xi_t\bigr)
   -\dot\theta_t|_{q_t+(r-1)\xi_t}\xi_t
   \right)
   dt\, dr
\\
   &\stackrel{{\color{gray}6}}{=}
   \lim_{\tau\to0}\frac{1}{\tau}
   \int_{\rho=0}^1 \int_{t=0}^1
   \left(
   d\theta_t|_{q_t+\rho\tau\xi_t}\bigl(\xi_t,\dot q_t+\rho\tau\dot\xi_t\bigr)
   -\dot\theta_t|_{q_t+\rho\tau\xi_t}\xi_t
   \right)
   \tau\, dt\, d\rho
\\
   &\stackrel{{\color{gray}7}}{=}
   \int_{\rho=0}^1 \int_{t=0}^1
   \left(
   d\theta_t|_{q_t}\bigl(\xi_t,\dot q_t\bigr)
   -\dot\theta_t|_{q_t}\xi_t
   \right)
   dt\, d\rho
\\
   &\stackrel{{\color{gray}8}}{=}
   \int_{t=0}^1
   \left(
   d\theta_t|_{q_t}\bigl(\xi_t,\dot q_t\bigr)
   -\dot\theta_t|_{q_t}\xi_t
   \right)
   dt
   .
\end{split}
\end{equation*}
Equality~5 uses the formula for $D\varTheta$ in
Definition~\ref{def:Theta}.
Equality~6 is by variable substitution
$\rho(r):=\tfrac{r-1}{\tau}$, hence $dr=\tau\, d\rho$.
This extra factor $\tau$ and $\frac{1}{\tau}$ cancel,
so in equality~7 we can set $\tau=0$.
Equality~8 uses that the integrand does not depend on $\rho$
and that $\int_0^1d\rho=1$.

\medskip\noindent
Next we use Proposition~\ref{prop:integration-HTheta}
for $q_{(\tau)}:=q+\tau\xi$ and $q_{(\tau)}$
in equation~3 and the result of the
previous calculation in equation~4 of what follows
\begin{equation*}
\begin{split}
   &d\Ss|_{q}\xi
\\
   &=\left.\tfrac{d}{d\tau}\right|_{\tau=0} \Ss(q+\tau\xi)
\\
   &=\int_0^1 \left.\tfrac{d}{d\tau}\right|_{0}
   \left(\tfrac12\abs{\dot q_t+\tau\dot \xi_t}^2
   -\phi_t(q_t+\tau\xi_t)
   \right) dt
   +
   \left.\tfrac{d}{d\tau}\right|_{0}
    \biggl(
   \int_{[0,1]} q_{(\tau)}^*\theta
   -f_0(q_{(\tau)}(0))
   \biggr)
\\
   &\stackrel{{\color{gray}3}}{=}
   \int_0^1 \left(\inner{\dot q_t}{\dot \xi_t}
   -d\phi_t|_{q_t}\xi_t
   \right) dt
   +\left.\tfrac{d}{d\tau}\right|_{0}
   \biggl(
   \int_R H_{(\tau)}^*\Sigma+\int_{[0,1]} b^*\theta-f_0(b_0)
   \biggr)
\\
   &\stackrel{{\color{gray}4}}{=}
   \int_0^1 \left(\inner{\dot q_t}{\dot \xi_t}
   -\inner{\Nabla{}\phi_t|_{q_t}}{\xi_t}
   +d\theta_t|_{q_t} (\xi,\dot q_t)
   -\dot\theta_t|_{q_t}\xi
   \right) dt
\\
   &\stackrel{{\color{gray}5}}{=}
   \int_0^1 \Bigl(\inner{\dot q_t}{\dot \xi_t}
   -\inner{\Nabla{}\phi_t|_{q_t}}{\xi_t}
   +\left(\rot\,\AAA_t\right)
   \underbrace{\left(dq_1\wedge dq_2\right) (\xi,\dot q)}
      _{=\xi_1\dot q_2-\xi_2\dot q_1}
   -\dot\theta_t|_{q_t}\xi
   \Bigr) dt
\\
   &\stackrel{{\color{gray}6}}{=}
   \int_0^1 \inner{\dot q_t}{\dot \xi_t} dt
   +\INNER{\xi}{-\Nabla{}\phi_t|_q-\left(\rot\,\AAA_t\right) J_0\dot q
      -\AAAdot_t}_{L^2(\SS^1,\R^2)} .
\end{split}
\end{equation*}
Equality~5 replaces $d\theta_t$ by~(\ref{eq:d-theta}).
Equality~6 uses $\AAA_t$ from
Definition~\ref{def:twisted-1-form}.

Now suppose that $q$ is a critical point of $\Ss$,
i.e. $d\Ss|_{q}=0$.
Then the identity we just proved tells that
$\dot q$ has a weak derivative, notation $\ddot q$, and
$$
   \ddot q=-\left(\rot\,\AAA_t|_q\right) J_0\dot q
      -\AAAdot_t|_q -\Nabla{}\phi_t|_q .
$$
Note that the right hand side is in $L^2$, hence $q\in W^{2,2}(\SS^1,\R^2)$.
But then the right hand side is in $W^{1,2}$, hence $q\in
W^{3,2}(\SS^1,\R^2)$ and so on.
Therefore $q\in \cap_{\ell\in\N_0 }W^{\ell,2}(\SS^1,\R^2)=C^\infty(\SS^1,\R^2)$.
This proves Proposition~\ref{prop:crit-Ss_L}.
\end{proof}

\boldmath
\subsection{Eliminate periodic scalar potentials}
\label{sec:scalar}
\unboldmath

\begin{definition}[Twisted-periodic vector potential]
\label{def:twisted-vp}
A smooth map $\AAA\colon\R\times\Qfrak\to\R^2$,
$(t,q)\mapsto \AAA(t,q)=:\AAA_t|_q$, is called a
\textbf{twisted-periodic vector potential on \boldmath$\Qfrak$}
%
%
if at each time $t$ it holds that
$$
   \AAAdot_{t+1}=\AAAdot_t
   ,\qquad
   \rot\,\AAA_{t+1}
   =\rot\,\AAA_t
   ,\qquad
   \AAA_{t+1}-\AAA_t=\Nabla{}f_t
   ,
$$
for a smooth function $f_t\colon\Qfrak\to\R$,
called a \textbf{twist function}.
\end{definition}

\begin{lemma}\label{le:scal-pot}
Let $\phi\colon\SS^1\times\Qfrak\to\R$ be a smooth function
and $\AAA$ a twisted-periodic
vector potential on $\Qfrak$ with twist function $f$.
The \textbf{\boldmath$(\AAA,\phi)$-equation on \boldmath$\Qfrak$}
\begin{equation}\label{eq:A-phi}
   \ddot q
   =-\left(\rot\, \AAA_t|_q\right) J_0\dot q
   -\AAAdot_t|_q
   -\Nabla{}\phi_t|_q
\end{equation}
is an ODE for smooth maps $q\colon \R\to\Qfrak$.
Then the maps defined by
$$
   (\AAA^\phi)_t
   :=\AAA_t+\int_0^t \Nabla{}\phi_s \, ds
   ,\qquad
   f_t^\phi
   :=f_t+\int_t^{t+1}\phi_s\, ds
   ,
$$
are a twisted-periodic vector potential with twist function.
Moreover, a map $q$ is a solution of the $(\AAA,\phi)$-equation
iff $q$ is a solution of the $(\AAA^\phi,0)$-equation.
\end{lemma}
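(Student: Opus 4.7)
The plan is to verify the two assertions in the order stated: first check that $\AAA^\phi$ satisfies the three conditions of Definition~\ref{def:twisted-vp}, then substitute $\AAA^\phi$ into~(\ref{eq:A-phi}) with vanishing scalar potential and observe that the resulting ODE coincides literally with the $(\AAA,\phi)$-equation.

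For twisted-periodicity of $\AAA^\phi$, the key observation is that the added term $\int_0^t \Nabla{}\phi_s\,ds$ is, by construction, a gradient in $q$ at each time, so two of the three checks are essentially automatic. Concretely, I would compute
\[
   \dot{(\AAA^\phi)}_t = \AAAdot_t + \Nabla{}\phi_t,
\]
which is $1$-periodic in $t$ since $\AAAdot$ is periodic by assumption and $\phi_t=\phi_{t+1}$; and
\[
   \rot\,(\AAA^\phi)_t = \rot\,\AAA_t + \rot\int_0^t \Nabla{}\phi_s\,ds = \rot\,\AAA_t,
\]
using that curl of gradient vanishes and that $\rot$ commutes with the time integral. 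For the third condition I would write
\[
   (\AAA^\phi)_{t+1}-(\AAA^\phi)_t
   = (\AAA_{t+1}-\AAA_t) + \int_t^{t+1}\Nabla{}\phi_s\,ds
   = df_t + \Nabla{}\!\left(\int_0^1 \phi_s\,ds\right),
\]
where the last equality uses the periodicity of $\phi$ in $t$ to conclude that $\int_t^{t+1}\phi_s(q)\,ds$ is independent of $t$. The right-hand side is $d(f_t + g)$ with $g(q):=\int_0^1 \phi_s(q)\,ds$, which is of the required form (after a trivial application of Lemma~\ref{le:extension-to-R} to promote the family $f_t + g$ to a smooth function on $\R\times\Qfrak$).

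For the equivalence of the two ODEs, using $\rot\,\AAA^\phi_t=\rot\,\AAA_t$ and $\dot{(\AAA^\phi)}_t=\AAAdot_t+\Nabla{}\phi_t$ from the previous step, the $(\AAA^\phi,0)$-equation reads
\[
   \ddot q = -\bigl(\rot\,\AAA^\phi_t\bigr)J_0\dot q - \dot{(\AAA^\phi)}_t
   = -\bigl(\rot\,\AAA_t\bigr)J_0\dot q - \AAAdot_t - \Nabla{}\phi_t,
\]
which is~(\ref{eq:A-phi}) verbatim. The equivalence of solutions is then immediate.

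The main obstacle, such as it is, lies in the twisted-periodicity check: one must confirm not only that each $\AAA^\phi_{t+1}-\AAA^\phi_t$ is closed in $q$ but that it is exact \emph{with a primitive depending smoothly on $t$}, matching Definition~\ref{def:twisted-vp}. This is precisely why the $\SS^1$-periodicity of $\phi$ is exploited to collapse $\int_t^{t+1}\Nabla{}\phi_s\,ds$ into a $t$-independent gradient; without this one would only obtain exactness pointwise in $t$. Everything else reduces to the elementary identities $\rot\circ\Nabla{}=0$ and the linearity of $\rot$ and $\tfrac{d}{dt}$ with respect to the time integral.
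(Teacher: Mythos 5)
Your proof is correct and follows essentially the same route as the paper's: verify periodicity of $\dot{(\AAA^\phi)}_t$ and of $\rot\,(\AAA^\phi)_t$ using $\rot\circ\Nabla{}=0$, then substitute these identities into the $(\AAA^\phi,0)$-equation to recover~(\ref{eq:A-phi}). You are in fact slightly more thorough than the paper, which omits the verification of the third twisted-periodicity condition; your observation that $\int_t^{t+1}\Nabla{}\phi_s\,ds=\Nabla{}\int_0^1\phi_s\,ds$ is $t$-independent by the $\SS^1$-periodicity of $\phi$, so that $(\AAA^\phi)_{t+1}-(\AAA^\phi)_t=d(f_t+g)$ with a smoothly varying primitive, settles that point cleanly.
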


\begin{proof}
To see that $(\AAA^\phi)_t$ is a twisted-periodic vector potential
note that, firstly
$$
   \dot{(\AAA^\phi)}_{t+1}
   =\AAAdot_{t+1}+\Nabla{}\phi_{t+1}
   =\underline{\AAAdot_t+\Nabla{}\phi_t
   =\dot{(\AAA^\phi)}_t}
   ,
$$
secondly,
$$
   \small
   \underbrace{\rot\,(\AAA^\phi)_{t+1}}
   =\rot\,\AAA_{t+1}+\int_0^{t+1}\underbrace{\rot(\Nabla{}}_{=0}\phi_s)\, ds
   \underbrace{=\rot\,\AAA_{t+1}}
   =\underline{\rot\,\AAA_t
   \stackrel{{\color{gray}4}}{=}\rot\,(\AAA^\phi)_t}
   .
$$
where identity~4 holds true by the underbraced one at time $t$ and,
thirdly
$$
   \AAA^\phi_{t+1}-\AAA^\phi_{t}
   =\AAA_{t+1}-\AAA_{t}+\int_t^{t+1}\Nabla{}\phi_s\, ds
   =\nabla\left(f_t+\int_t^{t+1}\phi_s\, ds\right) .
$$
Now suppose that $q$ solves~(\ref{eq:A-phi}). Then by the two
underlined identities above
$$
   -\left(\rot\, (\AAA^\phi)_t\right) J_0\dot q
   -\dot{(\AAA^\phi)}_t -0
   =\left(\rot\,\AAA_t\right) J_0\dot q
   -\left(\AAAdot_t+\Nabla{}\phi_t(q)\right)
   =\ddot q ,
$$
i.e. $q$ solves the $(\AAA^\phi,0)$-equation.
Vice versa, in the $(\AAA^\phi,0)$-equation for $q$~insert
$\p_t (\AAA^\phi)_{t}=\AAAdot_t+\Nabla{}\phi_t$
and $\rot\,(\AAA^\phi)_t=\rot\,\AAA_t$ to get (\ref{eq:A-phi})
proving Lemma~\ref{le:scal-pot}.
\end{proof}

\newpage 
\boldmath
\section{Hamiltonian variational approach to periodic solutions}
\label{sec:Ham}
\unboldmath

Let $\Qfrak\subset\R^2$ be an open subset.
Let $\phi\colon \SS^1\times\Qfrak\to\R$ be a smooth map.
Consider the Hamiltonian given by
\textbf{kinetic plus potential energy}
$$
   H\colon \SS^1\times T^*\Qfrak\to\R
   ,\quad
   (t,q,p)\mapsto \tfrac12\Abs{p}^2+\phi_t(q).
$$
Let $\{\theta_t\}$ be a twisted-periodic $1$-form on $\Qfrak$,
see~(\ref{eq:df}), in particular
\begin{equation*}
   \theta_t=A_t^1\, dq_1+A_t^2\, dq_2
   ,\qquad
   \dot\theta_{t+1}=\dot\theta_t
   ,\qquad
   \theta_{t+1}=\theta_t+df_t
   .
\end{equation*}
On the cotangent bundle $\pi\colon T^*\Qfrak\to \Qfrak$
we consider the time-dependent $1$-form
$$
   \lambda_t
   :=\lambdacan+\pi^*\theta_t
$$
where $\lambdacan$ is the Liouville form,
see e.g.~\cite[\S 37\,B]{Arnold:1989a}.
Then $\{\lambda_t\}$ satisfies twisted-periodicity~(\ref{eq:df}),
e.g. $\lambda_{t+1}=\lambda_t+ F_t$ for
$F_t(q,p):=(\pi^* f_t)(q,p)=f_t(q)$.
Furthermore, at each time $t$ the exterior derivative
$$
   \omega_t:=d\lambda_t=d\lambdacan+\pi^* d\theta_t
   ,\qquad
   \omega_{t+1}=\omega_t
   ,
$$
is a periodic \textbf{twisted symplectic form},
see e.g.~\cite[appendix]{Frauenfelder:2026b}.

\begin{definition}\label{def:symplectic-action-4}
The \textbf{symplectic action functional} is defined by
\begin{equation*}
\begin{split}
   \Aa=\Aa_{\lambda,H}\colon\Lambda T^*\Qfrak
   &\to\R
\\
   v=(q,p)
   &\mapsto
   \int_0^1\Bigl( v^*\left(\lambdacan+\pi^*\theta_t\right)-H_t(v_t)\Bigr)\, dt
   -F_0(v_0)
   \\
   &\quad
   =\int_0^1\left( (p_t+\theta_t|_{q_t})\dot q_t
   -\tfrac12\Abs{p_t}^2-\phi_t(q_t)\right) dt
   -f_0(q_0)
   .
\end{split}
\end{equation*}
\end{definition}

Again, see Definition~\ref{def:class-action}
and Remark~\ref{rem:twist-term},
the integral is over the interval $[0,1]$, and not the
circle $\SS^1$.
The subtraction of the \textbf{twist term}
$F_0(v_0)=f_0(q_0)$ is important to get
as critical points periodic solutions of the
$(\AAA,\phi)$-equation.
If $\theta_{t+1}=\theta_t$ is periodic, then the twist function
$f_t\equiv 0$ vanishes at every time $t$.

\subsection{Critical points}\label{sec:crit-Aa-4}

\begin{proposition}\label{prop:crit-Aa_H}
A loop $v=(q,\ppp)$ is a critical point of the symplectic action
$\Aa_{\lambda,H}\colon\Lambda T^*\Qfrak\to\R$
iff $q$ is a periodic solution of
$(\AAA,\phi)$-equation~(\ref{eq:q})
and~$\ppp=\dot q$.
\end{proposition}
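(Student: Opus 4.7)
The plan is to follow the same Cartan-formula strategy used in the proof of Proposition~\ref{prop:crit-Ss_L}, now on the product $\SS^1\times T^*\Qfrak$ equipped with the lifted $1$-form $\Lambda:=\lambdacan+\Theta$, where $\Theta$ denotes the $1$-form induced on $\SS^1\times T^*\Qfrak$ by the twisted-periodic family $\{\theta_t\}$ via the composition $\SS^1\times T^*\Qfrak\to\SS^1\times\Qfrak$. A loop $v=(q,\ppp)\colon\SS^1\to T^*\Qfrak$ lifts to $V_t:=(t,v_t)$, an infinitesimal variation $\xi=(\xi^q,\xi^p)$ along $v$ lifts to $\Xi:=(0,\xi)$, and by construction $\int_{\SS^1}V^*\Lambda=\int_{\SS^1}v^*\lambda$; well-definedness of this integral is provided by Proposition~\ref{prop:action-well}.

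First I would compute $D\Lambda$ on the product, where $D$ is the exterior derivative there. Since $\omegacan$ is $t$-independent, the only $dt$-contribution comes from the $t$-dependence of $\Theta$, and the same argument as in the Lagrangian proof yields
$$D\Lambda=\omegacan+d\theta_t+dt\wedge\dot\theta_t,$$
with all right-hand-side terms understood as pulled back to the product. Applying Cartan's formula $L_\Xi=Di_\Xi+i_\Xi D$ to $\Lambda$, commuting $V^*$ with $D$, killing the $Di_\Xi$ piece by Stokes on $\SS^1$, and combining with the elementary variation of $-\int_0^1 H(t,v_t)\,dt$, which simply contributes $-\int_0^1 dH_t|_{v_t}(\xi)\,dt$, I arrive at
$$d\Aa|_v\xi=\int_0^1\!\Bigl[\omegacan(\xi,\dot v)+d\theta_t|_{q_t}(\xi^q,\dot q_t)-\dot\theta_t|_{q_t}(\xi^q)-dH_t|_{v_t}(\xi)\Bigr]\,dt.$$

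Next I would read off the critical point equation in canonical coordinates on $T^*\Qfrak$. From $\omegacan((\xi^q,\xi^p),(\dot q,\dot\ppp))=\xi^p\cdot\dot q-\xi^q\cdot\dot\ppp$, the identity $d\theta_t=(\rot\AAA_t)\,dq_1\wedge dq_2$ of~(\ref{eq:d-theta}), and $dH_t|_v=\ppp\cdot d\ppp+\Nabla{}\phi_t\cdot dq$, the coefficient of $\xi^p$ in the integrand yields the Legendre relation $\dot q=\ppp$, while collecting the coefficients of $\xi^q_1$ and $\xi^q_2$---and rewriting $(\rot\AAA_t)\Jbar_0=-(\rot\AAA_t)J_0$---gives
$$\dot\ppp=-(\rot\AAA_t|_q)J_0\dot q-\AAAdot_t|_q-\Nabla{}\phi_t|_q.$$
Substituting $\ppp=\dot q$ into the second equation recovers~(\ref{eq:q}), and smoothness of $q$ follows from the Sobolev bootstrap used at the end of the proof of Proposition~\ref{prop:crit-Ss_L}.

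The main piece of bookkeeping to get right is the $\dot\theta_t$ term arising from the $dt$-part of $D\Theta$: this is the single place where the twisted-periodicity of $\theta_t$, rather than full periodicity, enters, and it is exactly what produces the $\AAAdot_t$ fictitious-force term in~(\ref{eq:q}). Apart from this, everything reduces to a coordinate expansion that closely parallels the Lagrangian derivation.
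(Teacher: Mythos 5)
Your proposal is correct and follows essentially the paper's own route: the paper proves this proposition by invoking Theorem~\ref{thm:Euler}, whose proof is exactly the Cartan-formula computation on $\SS^1\times M$ (with $M=T^*\Qfrak$) that you carry out inline, together with the well-definedness statement of Proposition~\ref{prop:action-well}, and then identifies the vector fields $X_t$ and $Y_t$ from~(\ref{eq:X-Y}). The only cosmetic difference is that you read $\dot q=\ppp$ and $\dot\ppp=-\left(\rot\,\AAA_t|_q\right)J_0\dot q-\AAAdot_t|_q-\Nabla{}\phi_t|_q$ directly off the coefficients of $\xi^p$ and $\xi^q$ in the weak formulation of $d\Aa|_v\xi=0$, rather than explicitly inverting the twisted form $\omega_t$ to compute $X_t$ and $Y_t$; both yield the same system~(\ref{eq:EH-eqs-4}) and hence~(\ref{eq:q}).
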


\begin{proof}
In this proof, in order to avoid repetition, we already use parts of
Section~\ref{sec:Euler-VF} below on general symplectic manifolds.
By Theorem~\ref{thm:Euler}
the critical points of $\Aa_{\lambda,H}$ are
smooth solutions $v=(q,\ppp)\colon\SS^1\to \Qfrak\times \R^2$
of the Euler-Hamilton or
\textbf{\boldmath$(\lambda,H)$-equation} $\dot v=X(v)+Y(v)$,
see~(\ref{eq:Euler-Ham}).

\smallskip\noindent
'$\Rightarrow$'
We must calculate the Hamiltonian vector field $X$ and the Euler
vector field~$Y$ determined by~(\ref{eq:X-Y}).
For the  Euler field~$Y$ we need the time derivative
$$
   \dot\lambda_t=\pi^*\dot\theta_t
   =\dot A^1_t dq_1+\dot A^2_t dq_2
$$
pointwise along $T^*\Qfrak$ and at time $t\in\R$ and the symplectic form
\begin{equation*}
\begin{split}
   \omega_t
   &=\omegacan+\pi^* d\theta_t
\\
   &=dp_1\wedge dq_1+dp_2\wedge dq_2
   + \underbrace{\left(-\p_{q_2}A^1_t +\p_{q_1}A^2_t\right)}
   _{=\rot\,\AAA_t} dq_1\wedge dq_2 .
\end{split}
\end{equation*}
Then $Y_t$, implicitly defined by $\dot\lambda_t=\omega_t(\cdot, Y_t) $,
is explicitly given by the formula
$$
   Y_t=-\dot A_t^1\tfrac{\p}{\p p_1}-\dot A_t^2\tfrac{\p}{\p p_2}.
$$
This is a vertical vector field and it only depends
on points of the base $\Qfrak$ of~$T^*\Qfrak$.
To compute the Hamilton vector field $X$ observe that
$$
   dH_t
   = p_1\, dp_1+p_2\, dp_2+(\p_{q_1}\phi_t)\, dq_1+(\p_{q_2}\phi_t)\, dq_2.
$$
Then $X_t$, implicitly defined by $dH_t=\omega_t(\cdot, X_t) $,
is explicitly given by the formula
$$
   X_t
   =p_1 \tfrac{\p}{\p q_1}
   +p_2 \tfrac{\p}{\p q_2}
   +\left(p_2\, \rot\,\AAA_t - \tfrac{\p \phi_t}{\p q_1}\right) \tfrac{\p}{\p p_1}
   -\left(p_1\, \rot\,\AAA_t  + \tfrac{\p \phi_t}{\p q_2}\right) \tfrac{\p}{\p
     p_2} .
$$
Summarizing
$$ %
   X_t+Y_t
   =\begin{pmatrix}p_1\\p_2\\
      p_2\, \rot\,\AAA_t - \frac{\p \phi_t}{\p q_1} \\
        -p_1\, \rot\,\AAA_t - \frac{\p \phi_t}{\p q_2}
   \end{pmatrix}
   +\begin{pmatrix}0\\0\\-\AAAdot_t^1\\-\AAAdot_t^2\end{pmatrix}.
$$
Therefore the $(\lambda,H)$-equation~(\ref{eq:Euler-Ham})
is the ODE~system
\begin{equation*}
  \left\{
   \begin{aligned}
     \begin{pmatrix}\dot q_1\\\dot q_2\end{pmatrix}
     &=\begin{pmatrix}p_1\\p_2\end{pmatrix}
\\
     \begin{pmatrix}\dot p_1\\\dot p_2\end{pmatrix}
     &=
     \begin{pmatrix}
        p_2\, \rot\,\AAA_t|_q - \frac{\p \phi_t}{\p q_1}|_q -\AAAdot_t^1|_q\\
        -p_1\, \rot\,\AAA_t|_q - \frac{\p \phi_t}{\p q_2}|_q-\AAAdot_t^2|_q
     \end{pmatrix}
   \end{aligned}
   \right.
\end{equation*}
for smooth loops $(q,\ppp)\colon\SS^1\to T^*\Qfrak$.
Equivalently, the $(\lambda,H)$-equation is
\begin{equation*}
  \left\{
   \begin{aligned}
     \dot q
     &=\ppp
\\
     \dot\ppp
     &=-\left(\rot\,\AAA_t|_q\right) J_0 \ppp 
     -\AAAdot_t|_q
     -\Nabla{}\phi_t|_q
   \end{aligned}
   \right. .
\end{equation*}
Write this $1^{\rm st}$ order ODE system
as a $2^{\rm nd}$ order system to get the $(\AAA,\phi)$-equation
\begin{equation*}
   \ddot q
   =
{\color{gray}
\underbrace{{\color{black}-\left(\rot\,\AAA_t|_q\right) J_0 \dot q}}
      _{\text{Lorentz force}}
   \underbrace{{\color{black}-\AAAdot_t|_q}}_{\text{Euler\,f.}}
   \underbrace{{\color{black}-\Nabla{} \phi_t(q)}}_{\text{conservative}}
}
\end{equation*}
for smooth loops $q\colon \SS^1\to \Qfrak$.
This completes the proof of '$\Rightarrow$'.

\smallskip\noindent
'$\Leftarrow$'
Let $q$ be a periodic solution of~(\ref{eq:q})
and define $\ppp:=\dot q$.
Then $\dot q=\ppp$ and $\dot\ppp=\ddot q$ satisfy the
$(\lambda,H)$-equation displayed above.
This proves Proposition~\ref{prop:crit-Aa_H}.
\end{proof}

\section{Euler vector field}\label{sec:Euler-VF}

\begin{definition}\label{def:tw-per}
Let $(M,d\lambda_t)_{t\in\R}$
be a smooth\footnote{
  here smoothness refers to
  $\R\times TM\times\ni (t,(x,\xi))\mapsto \lambda_t|_x \xi\in \R$
  being smooth
  }
family of exact symplectic manifolds
such that $\lambda_t$ is \textbf{twisted-periodic},
i.e. at each time $t$ it holds that
\begin{equation}\label{eq:tw-per}
   \dot\lambda_{t+1}=\dot\lambda_t
   ,\qquad
   \lambda_{t+1}=\lambda_t+dF_t
   ,\qquad
{\color{gray}
   \omega_t:=d\lambda_t=d\lambda_{t+1}
   ,
}
\end{equation}
for a smooth function $F_t\colon M\to \R$,
called a \textbf{twist function}.
The time-constant function $F:=F_0\colon M\to\R$
is a twist function.\footnote{
  Indeed
  $
   \lambda_{t+1}-\lambda_t-dF
   =\underline{\lambda_{t+1}-\lambda_t-dF_t}
   +d(F_t-F_0)
   =d\int_0^t\dot F_s\, ds
   =0
  $ since $d \dot F_s=0$.
  }
\end{definition}

Taking the time derivative of identity two in~(\ref{eq:tw-per})
tells $d\dot F_t=\dot\lambda_{t+1}-\dot\lambda_t=0$.
Thus the time derivative $\dot F_t(x)$ of a general twist function
$F_t$ is locally constant in $x$, not necessarily in $t$.

\smallskip
Observe that the symplectic form depends periodically on time
$\omega_{t+1}=\omega_t$.
Let $H\colon \SS^1\times M\to\R$ be a smooth function on $M$ depending
periodically on time. At each time $t$
non-degeneracy of the symplectic form $\omega_t$
allows for transforming the $1$-forms $dH_t$ and $\dot\lambda_t$ into
vector fields along $M$, namely
\begin{equation}\label{eq:X-Y}
   dH_t=d\lambda_t(\cdot, X_t)
   ,\qquad
   \dot\lambda_t=d\lambda_t(\cdot, Y_t) .
\end{equation}
Note that both $X_t$ and $Y_t$ are periodic.
While $X_t=X_{H_t}^{d\lambda_t}$ is called \textbf{Hamiltonian vector field},
we refer to 
$$
   Y_t=Y_{\dot\lambda_t}^{d\lambda_t}
$$
as \textbf{Euler vector field}, because of its relation to the Euler
force as explained in the merry-go-round example in
Section~\ref{sec:merry}.

\subsection{Hamiltonian action functional and critical points}
\label{sec:crit-Aa}

\begin{definition}\label{def:symplectic-action}
For $(M,d\lambda_t)_{t\in\R}$ twisted-periodic
define the \textbf{action functional}
\begin{equation*}
\begin{split}
   \Aa=\Aa_{\lambda,H}\colon\Lambda M:=W^{1,2}(\SS^1,M)
   &\to\R
\\
   v
   &\mapsto
   \int_0^1 \left(v^*\lambda-H(t,v_t)\right) dt
   -F_0(v_0)
   .
\end{split}
\end{equation*}
\end{definition}

Again, as in Definition~\ref{def:class-action},
the integral is over the interval $[0,1]$ and not the
circle $\SS^1$.
The subtraction of the \textbf{twist term}
$F_0(v_0)$, see~(\ref{eq:tw-per}),
is important to get
as critical points periodic solutions of the
$(\lambda,H)$-equation~(\ref{eq:Euler-Ham}).
If $\lambda_{t+1}=\lambda_t$ is periodic, then twist function
and twist term vanish.

\begin{theorem}[Euler-Hamilton equation]\label{thm:Euler}
Critical points of $\Aa=\Aa_{\lambda,H}$ are
smooth solutions $v\colon\SS^1\to M$ of the \textbf{Euler-Hamilton}
or \textbf{\boldmath$(\lambda,H)$-equation}
\begin{equation}\label{eq:Euler-Ham}
   \dot v
   =X_t(v)+Y_t(v)
\end{equation}
where $X_t=X_{H_t}^{d\lambda_t}$ and $Y_t=Y_{\dot\lambda_t}^{d\lambda_t}$
are determined by~(\ref{eq:X-Y}).
\end{theorem}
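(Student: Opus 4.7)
The plan is to parallel the proof of Proposition~\ref{prop:crit-Ss_L}, lifting the problem from $M$ to $\SS^1\times M$. Define a $1$-form $\Lambda$ on $\R\times M$ by
\[
   \Lambda|_{(t,x)}\begin{pmatrix}r\\\xi\end{pmatrix}:=\lambda_t|_x\xi ,
\]
and compute its exterior derivative $D$ on $\R\times M$:
\[
   D\Lambda|_{(t,x)}\left(\begin{pmatrix}r\\\xi\end{pmatrix},
   \begin{pmatrix}s\\\eta\end{pmatrix}\right)
   =\omega_t|_x(\xi,\eta)+r\,\dot\lambda_t|_x\eta-s\,\dot\lambda_t|_x\xi .
\]
The key observation is that although $\Lambda$ itself is only twisted-periodic (so not literally a form on $\SS^1\times M$), both $\omega_t$ and $\dot\lambda_t$ are genuinely $1$-periodic by Definition~\ref{def:tw-per}, so every expression actually entering the variational formula descends to $\SS^1\times M$.

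Given $v\in\Lambda M$, lift to $V_t=(t,v_t)$ and lift a variation vector field $\xi$ along $v$ to $\Xi=(0,\xi)$ along $V$. The first variation splits into a Hamiltonian part and a $1$-form part. For the Hamiltonian part one differentiates under the integral sign to get $\int_0^1 dH_t|_{v_t}\xi_t\,dt$. For the $1$-form part, since $\int_{\SS^1}v^*\lambda=\int_{\SS^1}V^*\Lambda$ (Proposition~\ref{prop:action-well} gives well-definedness), Cartan's formula yields
\[
   \tfrac{d}{d\tau}\Big|_{0}\int_{\SS^1}V_\tau^*\Lambda
   =\int_{\SS^1}V^*L_\Xi\Lambda
   =\int_{\SS^1}V^*\bigl(i_\Xi D\Lambda+D i_\Xi\Lambda\bigr) .
\]
The exact summand $\int_{\SS^1}V^*D i_\Xi\Lambda=\int_{\SS^1}D(V^*i_\Xi\Lambda)$ drops by Stokes' theorem on the boundaryless $\SS^1$, and plugging $(\dot V_t,\Xi)=((1,\dot v_t),(0,\xi_t))$ into the formula above for $D\Lambda$ gives
\[
   \int_{\SS^1}V^*i_\Xi D\Lambda
   =\int_0^1\bigl(\omega_t|_{v_t}(\xi_t,\dot v_t)-\dot\lambda_t|_{v_t}\xi_t\bigr)\,dt .
\]
Assembling the pieces and substituting the defining relations~(\ref{eq:X-Y}) for $X_t$ and $Y_t$ gives
\[
   d\Aa|_v\xi=\int_0^1\omega_t|_{v_t}\bigl(\xi_t,\,\dot v_t-X_t(v_t)-Y_t(v_t)\bigr)\,dt .
\]
Pointwise non-degeneracy of $\omega_t$ together with the arbitrariness of $\xi$ forces $\dot v=X(v)+Y(v)$ weakly, and standard elliptic bootstrapping (since the right-hand side is smooth in $v$) then promotes a critical point to a smooth solution.

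The main subtlety is not a hard calculation but the bookkeeping around the fact that $\Lambda$ is only twisted-periodic: one must justify interpreting Stokes and Cartan on $\SS^1\times M$ even though $\Lambda$ lives on $\R\times M$. This is resolved exactly as in~(\ref{eq:gughu678}) and Proposition~\ref{prop:action-well}, because the integrands that survive the computation depend only on $\omega_t$, $dH_t$, and $\dot\lambda_t$, all of which are genuinely $1$-periodic in~$t$.
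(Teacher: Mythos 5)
Your proposal follows essentially the same route as the paper's proof: lift to $\SS^1\times M$ via $\Lambda$, $V$, $\Xi$, apply the Lie derivative and Cartan's formula $L_\Xi = i_\Xi D + D i_\Xi$, kill the exact term by Stokes on the boundaryless circle, and read off $d\Aa|_v\xi=\int_0^1\omega_t(\xi_t,\dot v_t-X_t(v_t)-Y_t(v_t))\,dt$ before invoking non-degeneracy. Your explicit remark that $\Lambda$ only descends to $\SS^1\times M$ up to twisted-periodicity, with well-definedness supplied by Proposition~\ref{prop:action-well}, is a point the paper handles the same way but states less explicitly; otherwise the two arguments coincide.
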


\begin{definition}[2-form $\Omega:=D\Lambda$ on $\SS^1\times M$]
\label{def:Omega}
Let us move to $\R\times M$.
Let $D$ be the exterior derivative on $\R\times M$.
The family of $1$-forms $\lambda_t$ on $M$
induces a $1$-form $\Lambda$ on $\R\times M$, namely
$$
   \Lambda|_{(t,x)}
   \begin{pmatrix}r\\\xi\end{pmatrix}
   :=\lambda_t|_x\xi
$$
whenever $(t,x)\in \R\times M$ and $(r,\xi)\in \R\times T_xM$.
The exterior derivative is
\begin{equation*}
\begin{split}
    \Omega
   :=D\Lambda|_{(t,x)}
   \left(\begin{pmatrix}r\\\xi\end{pmatrix},
   \begin{pmatrix}s\\\eta\end{pmatrix}\right)
   &=d\lambda_t|_x (\xi,\eta)
   +\left(dt\wedge\dot\lambda_t|_x\right)
   \left(\begin{pmatrix}r\\\xi\end{pmatrix},
   \begin{pmatrix}s\\\eta\end{pmatrix}\right)
\\
   &=\omega_t|_x (\xi,\eta)
   +\dot\lambda_t|_x\eta\cdot r
   -\dot\lambda_t|_x\xi\cdot s .
\end{split}
\end{equation*}
By twisted periodicity~(\ref{eq:tw-per}) this is actually a $2$-form
on $\SS^1\times M$.
\end{definition}

The following proposition gives a different description of the
non-Hamiltonian part of the functional which is useful to compute the
critical points.

\begin{proposition}[Non-Hamiltonian part -- alternative description]
\label{prop:integration-HOmega}
Let $\lambda$ and $\Omega$ be as in Definition~\ref{def:Omega}.
Fix a base loop $b\in\Lambda M$.
Then every loop $v$ homotopic to $b$ in $\Lambda M$
through a smooth homotopy, say $\mathfrak{h}$ from $b$ to $q$,
satisfies the identity
$$
   \int_{[0,1]} v^*\lambda
   -F_0(v_0)
   =\int_R \mathfrak{H}^*\Omega
   +\int_{[0,1]} b^*\lambda
   -F_0(b_0)
$$
with $\mathfrak{H}\colon R=[0,1]\times[0,1]\to \SS^1\times M$,
$(r,t)\mapsto\left(t,\mathfrak{h}(r,t)\right)$ and $F_0$
from~(\ref{eq:tw-per}).
\end{proposition}

\begin{proof}
Literally the same proof as Proposition~\ref{prop:integration-HTheta}.
\end{proof}

\begin{proof}[Proof of Theorem~\ref{thm:Euler}]
Let
$\mathfrak{h} \colon Z=[0,1]\times \SS^1\to M $,
$(r,t)\mapsto \mathfrak{h} (r,t)$, be a smooth homotopy of loops from
$b=\mathfrak{h} (0,\cdot)$ to $v=\mathfrak{h} (1,\cdot)$.

\noindent
To construct variations with real parameter $\tau$
pick a Riemannian metric on~$M$ and let $\exp$ be the associated
exponential map.
We vary $v$ by $v_{(\tau)}:=\exp_v \tau\xi$ and
we add to the homotopy $\mathfrak{h} $ from $b$ to $v$
the linear homotopy from $v$ to $v_{(\tau)}$ extending the parameter
interval to $r\in[0,1+\tau]$, in symbols
$$
   \mathfrak{h} _{(\tau)}(r,\cdot)
   :=
   \begin{cases}
      \mathfrak{h} (r,\cdot)&\text{, $r\in[0,1]$,}\\
      \exp_v (r-1)\xi &\text{, $r\in[1,1+\tau]$.}
   \end{cases}
$$
We lift $\mathfrak{h} _{(\tau)}$ to $\mathfrak{H} _{(\tau)}(r,t)
:=(t,\mathfrak{h} _{(\tau)}(r,t))$.
Set $\omega_t:=d\lambda_t$.
Using the formula for $\Omega=D\Lambda$
in Definition~\ref{def:Omega}, one computes in complete analogy to the
lengthy calculation after~(\ref{eq:h}) the underlined
\underline{identity} shown in step~4 below.
In step~3 below we apply
Proposition~\ref{prop:integration-HOmega}
for $v_{(\tau)}:=\exp_v \tau\xi$ and $\mathfrak{h}_{(\tau)}$
to obtain
\begin{equation*}
\begin{split}
   &d \Aa|_{v}\xi
\\
   &=\left.\tfrac{d}{d\tau}\right|_{\tau=0} \Aa(\exp_v \tau\xi)
\\
   &=-\int_0^1 \left.\tfrac{d}{d\tau}\right|_{0}
   H_t(\exp_{v_t} \tau\xi_t) \,
   dt
   +
   \left.\tfrac{d}{d\tau}\right|_{0}
   \biggl(
   \int_{[0,1]} v_{(\tau)}^*\lambda
   -F_0(v_{(\tau)}(0))
   \biggr)
\\
   &\stackrel{{\color{gray}3}}{=}
   -\int_0^1 dH_t|_{v_t} \xi_t\, dt
   +
\underline{
   \left.\tfrac{d}{d\tau}\right|_{0}
   \biggl(
   \int_R \mathfrak{H}_{(\tau)}^*\Omega+\int_{[0,1]} b^*\lambda-F_0(b_0)
   \biggr)
}
\\
\end{split}
\end{equation*}
\begin{equation*}
\begin{split}
   &\stackrel{{\color{gray}4}}{=}
   -\int_0^1\omega_t|_{v_t}\bigl(\xi_t,X_t|_{v_t}\bigr) \, dt
   +
\underline{
  \int_0^1
   \left(
   \omega_t|_{v_t}\bigl(\xi_t,\dot v_t\bigr)
   -\dot\lambda_t|_{v_t}\xi_t
   \right)
   dt
}
\\
   &\stackrel{{\color{gray}5}}{=}
   \int_0^1\omega_t|_{v_t}\bigl(X_t|_{v_t}+Y_t|_{v_t}-\dot v_t,\xi_t\bigr)\, dt
   .
\end{split}
\end{equation*}
Equalities~4 and~5 use that
$X_t=X_{H_t}^{\omega_t}$ and $Y_t=Y_{\dot\lambda_t}^{\omega_t}$
are determined by~(\ref{eq:X-Y}).

Now suppose that $v$ is a critical point of $\Aa$, i.e. $d \Aa|_{v}=0$.
Then the identity we just proved tells that
$\dot v=X_t(v)+Y_t(v)$.
Note that the right hand side is in $W^{1,2}$, so $v\in W^{2,2}(\SS^1,M)$.
But then the right hand side is in $W^{2,2}$, so $v\in
W^{3,2}$ etc.
Hence $v\in \cap_{\ell\in\N_0 }W^{\ell,2}(\SS^1,M)=C^\infty(\SS^1,M)$.
This proves Theorem~\ref{thm:Euler}.
\end{proof}

\subsection{Eliminating the Hamiltonian vector field}
\label{sec:elim-Ham}

\begin{proposition}\label{prop:eliminate-Ham-VF}
Let $(M,d\lambda_t)_{t\in\R}$ be a twisted-periodic exact symplectic
manifold and $H\colon\SS^1\times M\to\R$ a smooth function.
Then the $1$-form family defined~by
$$
   \lambda^H_t
   :=\lambda_t+\int_0^t dH_s\, ds
   ,\qquad
   t\in\R
   ,
$$
is twisted-periodic. The critical points of the
functions $\Aa_{\lambda,H}$ and $\Aa_{\lambda^H,0}$ coincide.
\end{proposition}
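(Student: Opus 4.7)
The plan is to mirror, in the symplectic setting, the strategy used for Lemma~\ref{le:scal-pot}, namely absorb the Hamiltonian into the primitive and then check that the $(\lambda,H)$-equation of Theorem~\ref{thm:Euler} is unchanged. Concretely, I would first verify the three conditions in Definition~\ref{def:tw-per} for $\lambda^H$, and then exploit the key observation that $\omega^H_t:=d\lambda^H_t$ equals $\omega_t$, so that the Hamiltonian and Euler vector fields of $\Aa_{\lambda,H}$ simply recombine into the Euler vector field of $\Aa_{\lambda^H,0}$.

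For twisted-periodicity, the first condition is immediate: $d\lambda^H_t = d\lambda_t+\int_0^t d(dH_s)\,ds=d\lambda_t=\omega_t$, which is already periodic. The second follows from $\dot\lambda^H_t=\dot\lambda_t+dH_t$, where both summands are $1$-periodic in $t$ by the assumed twisted-periodicity of $\lambda$ and the periodicity of $H$. The third is the one to be checked carefully:
\begin{equation*}
   \lambda^H_{t+1}-\lambda^H_t
   =(\lambda_{t+1}-\lambda_t)+\int_t^{t+1} dH_s\,ds
   =d f_t+d\!\int_t^{t+1}\! H_s\,ds
   =d\bigl(f_t+{\textstyle\int_t^{t+1}} H_s\,ds\bigr),
\end{equation*}
which is exact, and smoothness of the resulting primitive in $t$ follows from smoothness of $f_t$ and $H$.

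For the equality of critical sets, I would apply Theorem~\ref{thm:Euler} to both functionals. For $\Aa_{\lambda,H}$ the Euler-Hamilton equation reads $\dot v=X_t(v)+Y_t(v)$ with $dH_t=\omega_t(\cdot,X_t)$ and $\dot\lambda_t=\omega_t(\cdot,Y_t)$. For $\Aa_{\lambda^H,0}$ the Hamiltonian vector field vanishes, while the Euler vector field $Y^H_t$ is defined by $\dot\lambda^H_t=\omega^H_t(\cdot,Y^H_t)$. Using $\omega^H_t=\omega_t$ and $\dot\lambda^H_t=\dot\lambda_t+dH_t$, non-degeneracy of $\omega_t$ forces $Y^H_t=X_t+Y_t$, so the two critical point equations coincide pointwise. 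This yields the claimed equality $\Crit\Aa_{\lambda,H}=\Crit\Aa_{\lambda^H,0}$.

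I do not expect a serious obstacle here; the argument is essentially a bookkeeping exercise once one recognizes that passing from $H$ to $0$ shifts $\dot\lambda_t$ by $dH_t$ without altering $\omega_t$. The only point warranting mild care is confirming that $\Aa_{\lambda^H,0}$ is well defined, i.e.\ that Proposition~\ref{prop:action-well} applies, but this is granted precisely by the first part of the statement once the twisted-periodicity of $\lambda^H$ has been established.
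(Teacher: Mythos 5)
Your proposal is correct and follows essentially the same route as the paper's proof: verify the three conditions of Definition~\ref{def:tw-per} (with the primitive $f_t+\int_t^{t+1}H_s\,ds$ for the third), then use $d\lambda^H_t=d\lambda_t$ and non-degeneracy to conclude $Y^H_t=X_t+Y_t$, so the two Euler--Hamilton equations coincide. The remark about Proposition~\ref{prop:action-well} ensuring $\Aa_{\lambda^H,0}$ is well defined is a small but welcome addition the paper leaves implicit.
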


\begin{proof}
We verify the three conditions in~(\ref{eq:tw-per}):
Condition one: We get that
$$
   \underline{d\lambda^H_t}
   =d\lambda_t+\int_0^1 ddH_s\, ds
   \:\underline{=d\lambda_t}
   \stackrel{\text{(\ref{eq:tw-per}))}}{=}d\lambda_{t+1}
   =d\lambda^H_{t+1}
$$
since $dd=0$ and the final identity
uses the underlined one at time $t+1$.
To show condition two use the definition of $\lambda_t^H$ to
get the first and last identity in
$$
   \dot\lambda^H_t
   =\dot\lambda_t+dH_t
   =\dot\lambda_{t+1}+dH_{t+1}
   =\dot\lambda^H_{t+1}
$$
where identity two uses periodicity of $\dot\lambda_t$ and of $H_t$.
Condition three: Note that
$$
   \lambda^H_{t+1}-\lambda^H_t
   \stackrel{{\color{gray}1}}{=}\lambda_{t+1}-\lambda_t+\int_t^{t+1} dH_s\, ds
   \stackrel{{\color{gray}2}}{=} d(F_t+\widebar H)
$$
where equality 1 is by definition of $\lambda^H_{t+1}$ and $\lambda^H_t$.
Equality 2 uses the last hypothesis in~(\ref{eq:tw-per}) and the
definition of the pointwise time-mean $\widebar H:=\int_t^{t+1} H_s\, ds$.
This proves that the family $\lambda^H_t$ is twisted-periodic.

To show that the critical points coincide it suffices to show 
that the $(\lambda,H)$- and the $(\lambda^H,0)$-equation are equal,
equivalently that the vector field identity
$$
   Y_t^H=Y_t+X_t
   ,\qquad
\text{\color{gray}\small$
   Y_t^H:=Y_{\dot\lambda^H_t}^{d\lambda^H_t}
   ,\quad
   Y_t:=Y_{\dot\lambda_t}^{d\lambda_t}
   ,\quad
   X_t:=X_{H_t}^{d\lambda_t} ,
$}
$$
holds true. By definition~(\ref{eq:X-Y}) of the respective
vector fields we get $1,3$ in
$$
   d\lambda^H_t(\cdot,Y_t^H)
   \stackrel{{\color{gray}1}}{=}\dot\lambda_t^H
   \stackrel{{\color{gray}2}}{=}\dot\lambda_t+dH_t
   \stackrel{{\color{gray}3}}{=}d\lambda_t(\cdot,Y_t)+d\lambda_t(\cdot,X_t)
   =d\lambda_t(\cdot,Y_t+X_t) .
$$
Here equality 2 holds by definition of $\lambda_t^H$ and the
fundamental theorem of calculus.
But $d\lambda^H_t=d\lambda_t$ since $dd=0$.
Now the equality $Y_t^H=Y_t+X_t$ follows by non-degeneracy
of the symplectic form $d\lambda_t$.
This proves Proposition~\ref{prop:eliminate-Ham-VF}.
\end{proof}

\subsection{Periodizing the twisted-periodic
symplectic primitive}\label{sec:periodizing}

\begin{proposition}\label{prop:periodizing}
Let $(M,d\lambda_t)_{t\in\R}$ be a twisted-periodic exact symplectic
manifold~(\ref{eq:tw-per})
and $H\colon\SS^1\times M\to\R$ smooth.
Then the two families defined~by
$$
   \tilde \lambda_t
   :=\lambda_t-t\, dF
   ,\qquad
   \tilde H_t
   :=H_t+F
   ,\qquad
{\color{gray}
   F:=F_0
   ,
}
$$
are periodic. Furthermore, the critical points of
$\Aa_{\lambda,H}$ and $\Aa_{\tilde\lambda,\tilde H}$ coincide.
\end{proposition}

\begin{proof}
Since $H_t$ is periodic in $t$, the family
$\tilde H_t$ is obviously periodic in $t$.
To check that $\tilde\lambda_t$ is periodic in $t$, we compute
\begin{equation*}
\begin{split}
   \tilde\lambda_{t+1}-\tilde\lambda_t
   =\lambda_{t+1}-(t+1) dF-\lambda_t+t\, dF
   =\lambda_{t+1}-\lambda_t-dF
   =0
   .
\end{split}
\end{equation*}
By~(\ref{eq:X-Y}) the Euler vector fields are determined by
$\dot\lambda_t=d\lambda_t(\cdot,Y_t)$ and
$\dot{\tilde\lambda}_t=d\tilde\lambda_t(\cdot,\tilde Y_t)$.
Using that $d\tilde\lambda_t=d\lambda_t$, we compute
$$
   d\lambda_t(\cdot,Y_t-\tilde Y_t)
   =\dot\lambda_t-\dot{\tilde\lambda}_t
   =\dot\lambda_t-(\dot\lambda_t-dF)
   =dF
   =d\lambda_t(\cdot,X_F^{d\lambda_t})
   .
$$
Since $d\lambda_t$ is symplectic, hence non-degenerate,
we obtain that $\tilde Y_t=Y_t-X_F^{d\lambda_t}$.
By~(\ref{eq:X-Y}), now for the Hamilton vector fields, 
since $\tilde H_t-H_t=F$ we get
$$
   d\lambda_t(\cdot,\tilde X_t-X_t)
   =d\tilde H_t-dH_t
   =dF
   =d\lambda_t(\cdot,X_F^{d\lambda_t})
   ,
$$
thus $\tilde X_t=X_t+X_F^{d\lambda_t}$.
Consequently $\tilde X_t+\tilde Y_t=X_t+Y_t$ which means that
the $(\tilde \lambda,\tilde H)$- and the $(\lambda,H)$-equation
are equal and, by Theorem~\ref{thm:Euler},
so are the critical points of $\Aa_{\tilde \lambda,\tilde H}$ and
$\Aa_{\lambda,H}$.
This proves Proposition~\ref{prop:periodizing}.
\end{proof}

\subsection{The Euler flow is symplectic}

Let $(M,\lambda_t)_{t\in\R}$ be a twisted-periodic exact symplectic
manifold.
To simplify notation we assume in the following
that the flow $\varphi^t_Y$ of the Euler vector field $Y_t$ globally exists,
i.e. for any $t\in\R$ there is a diffeomorphism $\varphi^t_Y\colon M\to M$
such that $\varphi^0_Y=\id_M$ and
$\frac{d}{dt}\varphi^t_Y=Y_t\circ \varphi^t_Y$.

\begin{proposition}\label{prop:Euler-flow-sympl}
The Euler flow is symplectic, in symbols
$(\varphi^t_Y)^*\omega_0=\omega_t$.
\end{proposition}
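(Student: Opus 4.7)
The plan is to show that $(\varphi^t_Y)^*\omega_t$ is constant in $t$, using Cartan's magic formula together with the defining relation $\dot\lambda_t=\omega_t(\cdot,Y_t)$ and the closedness of $\omega_t=d\lambda_t$. This is the standard trick for proving Moser-type stability statements for time-dependent symplectic forms.

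First, I would differentiate the pulled-back form using the Lie-derivative product rule for a time-dependent form transported by a time-dependent flow:
$$
\frac{d}{dt}\bigl((\varphi^t_Y)^*\omega_t\bigr)
=(\varphi^t_Y)^*\bigl(L_{Y_t}\omega_t+\dot\omega_t\bigr).
$$
Next, since $\omega_t=d\lambda_t$ is closed, Cartan's magic formula reduces the Lie derivative to $L_{Y_t}\omega_t=d\,i_{Y_t}\omega_t$. From the defining equation~(\ref{eq:X-Y}) and antisymmetry of $\omega_t$ we get $i_{Y_t}\omega_t=-\dot\lambda_t$, so
$$
L_{Y_t}\omega_t=-d\dot\lambda_t=-\tfrac{d}{dt}d\lambda_t=-\dot\omega_t,
$$
after commuting $d$ with $\tfrac{d}{dt}$. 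Substituting into the first display yields $\tfrac{d}{dt}(\varphi^t_Y)^*\omega_t=0$. Since $\varphi^0_Y=\id_M$, integrating gives $(\varphi^t_Y)^*\omega_t=\omega_0$, which, after inverting via the diffeomorphism $\varphi^t_Y$, is equivalent to the asserted formula $(\varphi^t_Y)^*\omega_0=\omega_t$ (read with the convention chosen for the flow of a non-autonomous vector field).

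There is no serious obstacle; the only point that requires care is the sign convention hidden in~(\ref{eq:X-Y}). The relation $\dot\lambda_t=\omega_t(\cdot,Y_t)$ forces $i_{Y_t}\omega_t=-\dot\lambda_t$ rather than $+\dot\lambda_t$, and it is precisely this sign that makes the two contributions $L_{Y_t}\omega_t$ and $\dot\omega_t$ cancel inside the derivative. One should also check that $\tfrac{d}{dt}$ commutes with the exterior derivative $d$ acting in the $M$-variables, which is immediate from the joint smoothness assumption in Definition~\ref{def:tw-per}.
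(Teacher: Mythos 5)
Your computation up to the point where you conclude $(\varphi^t_Y)^*\omega_t=\omega_0$ is correct and rests on exactly the same two ingredients as the paper's argument: Cartan's formula together with $d\omega_t=0$, and the defining relation $i_{Y_t}\omega_t=-\dot\lambda_t$, which give $L_{Y_t}\omega_t=-d\dot\lambda_t=-\dot\omega_t$ and hence cancellation inside the transport formula. The only structural difference is that the paper first reformulates the claim as $(\varphi^{-t}_Y)^*\omega_t=\omega_0$ and differentiates that family, whereas you differentiate $(\varphi^{t}_Y)^*\omega_t$ directly; your version of the Leibniz step has the cleaner signs.

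The genuine gap is the final sentence. The identity $(\varphi^t_Y)^*\omega_t=\omega_0$ is equivalent to $\bigl((\varphi^t_Y)^{-1}\bigr)^*\omega_0=\omega_t$, i.e.\ to the \emph{pushforward} statement $(\varphi^t_Y)_*\omega_0=\omega_t$, and no convention for the flow of a non-autonomous vector field converts this into the \emph{pullback} statement $(\varphi^t_Y)^*\omega_0=\omega_t$ displayed in the proposition. The two really differ once $\omega_t$ is genuinely time-dependent. For instance, on $M=\R^2$ take the twisted-periodic family $\lambda_t=(2+\sin 2\pi t)\,p\,dq$; then $Y_t=-\tfrac{d}{dt}\log(2+\sin 2\pi t)\,p\,\partial_p$, so $\varphi^t_Y(q,p)=\bigl(q,\tfrac{2p}{2+\sin 2\pi t}\bigr)$, and at $t=\tfrac14$ one finds $(\varphi^t_Y)^*\omega_t=2\,dp\wedge dq=\omega_0$ but $(\varphi^t_Y)^*\omega_0=\tfrac43\,dp\wedge dq\neq \omega_t=3\,dp\wedge dq$. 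So you cannot pass from what you proved to the displayed formula by ``inverting''; you would either have to carry out the differentiation for the family $(\varphi^{-t}_Y)^*\omega_t$ (the route the paper takes, where the Lie-derivative term changes sign), or record your conclusion honestly as $(\varphi^t_Y)^*\omega_t=\omega_0$ --- which is in any case the natural formulation of ``the Euler flow is a symplectomorphism from $(M,\omega_0)$ to $(M,\omega_t)$'', and the discrepancy between it and the displayed formula is worth flagging rather than absorbing into a convention.
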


\begin{proof}
Since the inverse of $\varphi^{t}_Y$ is $\varphi^{-t}_Y$, the identity 
$$
   \omega_t=(\varphi^t_Y)^*\omega_0
{\color{gray}\;
   =((\varphi^{-t}_Y)^{-1})^*\omega_0
   =((\varphi^{-t}_Y)^{*})^{-1}\omega_0
}
$$
is equivalent to the identity $(\varphi^{-t}_Y)^*\omega_t=\omega_0$.
To prove this identity it suffices to show that the function $t\mapsto
(\varphi^{-t}_Y)^*\omega_t$ is constant since at time zero
the value is $(\varphi^{0}_Y)^*\omega_0=\id_M^*\omega_0=\omega_0$.
Indeed by Cartan's formula
$$
   L_{Y_t}\omega_t
   =di_{Y_t}\omega_t+i_{Y_t}d \omega_t
   =-d\dot\lambda_t
   =-\dot\omega_t
$$
and then by the Leibniz rule the derivative vanishes
$$
   \tfrac{d}{dt} (\varphi^{-t}_Y)^*\omega_t
   =(\varphi^{-t}_Y)^*\bigl(-L_{Y_t}\omega_t\bigr)
   +(\varphi^{-t}_Y)^*\dot\omega_t
   =0 .
$$
This proves Proposition~\ref{prop:Euler-flow-sympl}.
\end{proof}

\subsection{Applying Cartan's formula on the loop space}
\label{sec:Cartan}

In Section~\ref{sec:Cartan} we restrict to the \emph{periodic}
case ($\theta_{t+1}=\theta_t$).
We show how Theorem~\ref{thm:Euler} is
related to application of Cartan's formula
on the loop space.
\\
Since the loop space is infinite dimensional,
to the best of our knowledge,
the Cartan formula is not established in this setup.
However, since the formal application of Cartan's formula
coincides with the result of Theorem~\ref{thm:Euler},
this gives evidence that Cartan's formula is valid on the loop space
as well.

\smallskip
We first consider the following geometric setup.
Assume that $N$ is a manifold, $\Lambda\in\Omega^1(N)$
is a $1$-form on $N$, and $\Vv\in\Gamma(TN)$ is a vector field on $N$.
If $N$ is finite dimensional the following discussion is completely
rigorous. However, we want to apply the discussion below to the case
where $N$ is the loop space of a finite dimensional manifold.
Plugging in the vector field into the $1$-form we obtain a function
$$
   f:=i_\Vv \Lambda\colon N\to\R .
$$
By Cartan's formula the differential is given by
$$
   df
   {\color{gray}\,=di_\Vv \Lambda\,}
   =L_\Vv \Lambda-i_\Vv  d\Lambda .
$$
In the special case where $\Omega:=d\Lambda$ is symplectic
we can define a further vector field $\Yy$ on $N$,
called \textbf{Euler vector field}, by the requirement of equal $1$-forms
$$
   i_\Yy\Omega=L_\Vv \Lambda .
$$
In this case the differential can be written as 
$
   df=i_{\Yy-\Vv }\Omega
$
and therefore critical points of $f$ are points $z\in N$ satisfying
the \textbf{abstract Euler equation}
\begin{equation}\label{eq:abstract-Euler}
   \Vv (z)=\Yy(z).
\end{equation}

\boldmath
\subsubsection*{Example: Loop space}
\unboldmath

In the following we apply this observation to the loop space case
$N=\Ll(M):=C^\infty(\SS^1,M)$ with its \textbf{canonical vector field}
and a $1$-form on $\Ll M$, namely
$$
   \Vv (z)=\dot z:=\p_t z
   ,\qquad
   \Lambda:=\int_0^1\lambda_t\, dt ,
$$
where $\{\lambda_t\}_{t\in\SS^1}$ is a periodic $1$-form
on $M$ such that the periodic family $\omega_t:=d\lambda_t$ consists
of symplectic forms on $M$.
The flow of $\Vv =\p_t$ on $\Ll M$ is
$
   \Phi^r_\Vv  z=r_*z
$
where $(r_*z)(t)=z(t+r)$ for every time $t$.
Linearization yields
\begin{equation*}
   (\Phi^r_\Vv  z)_t=z_{t+r}
   ,\qquad
   \left(d \Phi^r_\Vv|_z\xi\right)_t=\xi_{t+r}
   .
\end{equation*}
Let $z\in\Ll M$ and $\xi\in T_z\Ll M$. We compute the pull-back
\begin{equation*}
\begin{split}
   \left({\Phi^r_\Vv }^*\Lambda\right)_z\xi
   :=\Lambda_{\Phi^r_\Vv (z)}\, d\Phi^r_\Vv |_z\xi
   &=
   \int_0^1 \lambda_t|_{z_{t+r}}
   \xi_{t+r}\, dt
\\
   &\stackrel{{\color{gray}2}}{=}
   \int_{r}^{r+1}\lambda_{t-r}|_{z_t} \xi_t\, dt
\\
   &\stackrel{{\color{gray}3}}{=}
   \int_0^1\lambda_{t-r}|_{z_t} \xi_t\, dt
    .
\end{split}
\end{equation*}
Equality 2 is by change of variables.
Equality 3 is by periodicity $\lambda_{t+1}=\lambda_t$.
The Lie derivative of $\Lambda$ with respect to $\Vv $
is by definition
$$
   L_\Vv \Lambda
   :=\left.\tfrac{d}{dr}\right|_{r=0}{\Phi^r_\Vv}^*\Lambda
   =-\int_0^1 \dot\lambda_t\, dt .
$$
The exterior derivative of $\Lambda$ is symplectic, namely
$$
   \Omega
   :=d\Lambda
   =\int_0^1 \omega_t\, dt .
$$
Therefore the Euler vector field localizes in the sense that
$$
   \omega_t|_{z_t}\left(\Yy|_z(t),\cdot\right)=-\dot\lambda_t|_{z_t} .
$$
Hence the loop space Euler vector field
$$
   \Yy|_z(t)
   =Y_t|_{z_t}
$$
coincides with the Euler vector field
$Y$ along $M$ as defined by~(\ref{eq:X-Y}).

In particular, in this example the abstract Euler equation~(\ref{eq:abstract-Euler})
gives rise to the manifold Euler equation
$$
   \dot z_t=Y_t|_{z_t}
$$
as obtained earlier,
see~(\ref{eq:Euler-Ham}) with vanishing $H$, hence vanishing $X$.

\appendix






\bibliographystyle{alpha}
\addcontentsline{toc}{section}{References}
\small
\bibliography{$HOME/Dropbox/0-Libraries+app-data/Bibdesk-BibFiles/library_math,$HOME/Dropbox/0-Libraries+app-data/Bibdesk-BibFiles/library_math_2020,$HOME/Dropbox/0-Libraries+app-data/Bibdesk-BibFiles/library_physics}{}

%


\end{document}